\newcounter{item}[section]
\newcounter{kirshr}
\newcounter{kirsha}
\newcounter{kirshb}
\newenvironment{enumroman}{\setcounter{kirshr}{1}
\begin{list}{(\roman{kirshr})}{\usecounter{kirshr}} }{\end{list}}
\newenvironment{enumarab}{\setcounter{kirshb}{1}
\begin{list}{(\arabic{kirshb})}{\usecounter{kirshb}} }{\end{list}}
\newtheorem{theorem}{Theorem}[section]
\newtheorem{lemma}[theorem]{Lemma}
\newtheorem{corollary}[theorem]{Corollary}
\newenvironment{demo}[1]{\noindent{\bf #1.}\upshape\mdseries}
{\nopagebreak{\hfill\rule{2mm}{2mm}\nopagebreak}\par\normalfont}
\theoremstyle{definition}
\newtheorem{definition}[theorem]{Definition}
\def\C{{\mathfrak{C}}}
\def\Fm{{\mathfrak{Fm}}}
\def\Nr{{\mathfrak{Nr}}}
\def\Fr{{\mathfrak{Fr}}}
\def\Sg{{\mathfrak{Sg}}}
\def\Fm{{\mathfrak{Fm}}}
\def\A{{\mathfrak{A}}}
\def\B{{\mathfrak{B}}}
\def\C{{\mathfrak{C}}}
\def\D{{\mathfrak{D}}}
\def\M{{\mathfrak{M}}}
\def\N{{\mathfrak{N}}}
\def\GPEA{{\sf{GPEA}}}
\def\Bl{{\mathfrak{Bl}}}
\def\T{{bf T}}
\def\CA{{\bf CA}}
\def\SA{{\bf SA}}
\def\Lf{{\bf Lf}}
\def\Dc{{\bf Dc}}
\def\RCA{{\bf RCA}}
\def\SA{{\bf SA}}
\def\Rd{{\ Rd}}
\def\(R)RA{{\bf (R)RA}}
\def\RA{{\bf RA}}
\def\RRA{{\bf RRA}}
\def\Dc{{\bf Dc}}
\def\Dc{{\bf Dc}}
\def\Mn{{\bf Mn}}
\def\Mn{{\bf Mn}}
\def\P{{\mathcal P}}
 \def\CA{{\sf CA}}
\def\B{{\sf B}}
\def\Nr{{\mathfrak{Nr}}}
\def\Ra{{\mathfrak{Ra}}}
\def\Ra{{\mathfrak{Ra}}}
\def\Nr{{\mathfrak{Nr}}}
\def\A{{\mathfrak{A}}}
\def\B{{\mathfrak{B}}}
\def\C{{\mathfrak{C}}}
\def\D{{\mathfrak{D}}}
\def\A{{\mathfrak{A}}}
\def\B{{\mathfrak{B}}}
\def\C{{\mathfrak{C}}}
\def\D{{\mathfrak{D}}}
\def\Ig{{\mathfrak{Ig}}}
\def\L{{\mathfrak{L}}}
\def\Fr{{\mathfrak{Fr}}}
\def\L{{\mathfrak{L}}}
\def\CA{{\bf CA}}
\def\RA{{\bf RA}}
\def\RRA{{\bf RRA}}
\def\RCA{{\bf RCA}}
\def\Rd{{\mathfrak{Rd}}}
\def\Cs{{\sf{Cs}}}
\def\QRA{{\sf{QRA}}}
\begin{document}
%\maketitle

%\section*{Introduction}
%\section{Introduction}

%\end{document}
\title{On various amalgamation bases for cylindric algebras, and relation structures}

\author{Tarek Sayed Ahmed\thanks{Email:
rutahmed@gmail.com}\\Department of Mathematics,\\  Faculty of Science,\\
Cairo University,\\ Giza, Egypt.}
%%% ----------------------------------------------------------------------
%

 \maketitle

\begin{abstract} We characterize completely the amalgamation, strong amalgamation, and superamalgamation base
of several classes of representable algebras. The characerization is via special neat embeddings. We also study expansions of such algebras that have the 
superamalgamation property.
%This answers a question of Monk.
\footnote{ 2000 {\it Mathematics Subject Classification.} Primary 03G15. Secondary 03C05, 03C40

keywords: Algebraic logic, cylindric algebras, amalgamation base}

%Pinter's substitution algebras and cylindric algebras.
%We show that the class of representable algebras have the
%amalgamation property. We thus obtain a variety $V$ of cylindric-like algebras such that 
%$\Fr_{\omega}V$ has the interpolation property but not the strong interpolation 
%property.

\end{abstract}

\section{Introduction}

Pigozzi and Comer prove that for $\alpha>1$, the class of representable cylindric algebras of dimension $\alpha$ 
fails to have the amalgamation property \cite{P}. The same result was established for many 
other algebras that are cousins to cylindric algebras, namely,
quasi-polyadic algebras, quasi-polyadic equality algebras and Pinter's substitution algebras.
For finite dimensions $n\geq 2$, this was done by Comer, and for infinite dimensions this was done by the present author.

In the book `Algebraic Logic', \cite{AMN}, edited by Andr\'eka, Monk and N\'emeti, several questions
were collected by the Editors at the final chapter. In this note we answer the following questions:
\begin{enumarab}

\item Characterize the amalgamation base of $\RCA_{\alpha}$ ($\A$ is in the amalgamation base of $K$, $APbase(K)$,  
if any two algebras in $K$ having $\A$ as a common subalgebra can be amalgamated over $\A$ in $K$).

\item Is $APbase(\RCA_{\alpha})\subseteq APbase(\CA_{\alpha})$?

%\item Is $SAPbase(RCA_{\alpha}\subseteq SAPbase(CA_{\alpha}$?
\end{enumarab}
In what follows we answer these questions (and one more) only for cylindric algebras. 
The proof is exactly the same for other algebras by the results in \cite{Stud}, namely, theorem 12, and corollary 13, 
which address the more general notion of 
systems of varieties definable by schemes.

The two questions are sound; the first motivated by the negative results of Comer and Pigozzi, while the second is motivated by the fact that 
the for $\alpha\geq 2$, the inclusion $\RCA_{\alpha}\subset \CA_{\alpha}$ is proper, and, in fact, for $\alpha\geq 3$ 
this gap cannot be axiomatized by any reasonable finite schema.

Some  comments, on the nature of the two questions,  are in order:

\begin{enumarab}
\item The word characterize here is vague; it could mean, for example, finding a set of first order formulas that defines the amalgamation base
implicitly meaning that the base is an elementary class. It could also mean to
find out how this class behaves with respect to the algebraic operations $H$ (taking homomorphic images), $S$ (forming subalgebras) and 
$P$ (taking products); are we lucky enough that it turns out to be 
an  equational class? These quesions are motivated by the fact that now we have the amalgamation base infront of us, 
so the most pressing need is to try to classify it.
Classifying is a kind of defining by a set of hopefully first order axioms.
In case it is elementary, or even better equational, is it perhaps finitely axiomatizable 
over the class of representable algebras? These are all fair questions.

However, we chose a different approach in attacking the first problem, that  we believe is founded on good reasons. 
We characterize this class by imposing conditions on how such algebras {\it neatly embed} into other algebras
$\omega$ extra dimensions; the point is that they always do (by the neat embedding theorem of Henkin).
But because the class of representable algebras fails the amalgamation property, so there has to be an additional 
condition on how they embed, that characterizes that they {\it are} in the amagamation base. 

We find a condition that is quite natural, and satisfactory, in so far as the neat embedding theorem of Henkin of characerizing 
the class of representable
algebras is satisfactory; the hitherto exhibited  condition  completely characterizes this class.

Such a  condition was actually found in \cite{AU}; it is reported in \cite{book}, 
but in the former reference only one implication is proved; here we
prove the equivalence, answering an implicit  question in both papers, and a question in \cite{AMN}.

\item  The question so formulated lends itself to further questions along the same line; indeed it allows a very natural 
generalization; by characterizing those algebras
that render stronger amalgamation properties.

In this connection, we characterize (via neat embeddings as well) 
the superamalgamation base and the strong amalgamation base of the class of representable algebras. 
We show that they are the same, which is slighly surprising and apparently in conflict
with the deep Sagi-Shelah result; there are varieties of finite dimensional representable algebras that have 
the strong amalgamation property but does not have the superamalgamation property \cite{Shelah}.

\item We also show that the superamalgamation base of $RCA_{\omega}$ is not contained in $APbase(CA_{\omega})$. 
This means that there is an algebra that superamalgamates any two algebras of which it is the common subalgebra, 
if these algebras are representable,  but there are non-representable algebras having this algebra as a common subalgebra, and they  cannot  be only 
amalgamated. This gives a very strong negative result to (2). This result is mention in \cite{Mad}, and a sketch of proof is given in \cite{Sayed}. 

\item The conditions we formulate are also given (syntactically) 
in the very general context of systems of varieties definable by schemes in \cite{Stud}.
In op.cit , the equivalence was shown to hold modulo a certain (semantical) condition, 
which cannot be dealt with in systems of varieties.  The main novelty here is that we prove this condition, hence the desired 
equivalence.
%In this note we characterise the amalgamation base, and superamalgmation base of such classes.
\end{enumarab}

Ordinals considered, unless otherwise specified, are infinite. Some of our results though apply to the finite dimensional case.
These occasions will be explicity mentioned. 

\section{Ctlindric algebras}

We follow the notation and terminology of the monograph \cite{HMT1} and \cite{AFN}. 
In particular $\CA_{\beta}$ stands for the class of cylindric algebras of dimension $\beta$.
$\Cs_{\beta}$ stands for the class of cylindric set algebras of dimension $\beta$. 
The class of representable algebras of dimension $\beta$, which coincides with the class of subdirect products of set algebras,
For $x\in \A$, let $\Delta x=\{i\in \beta: {\sf c}_ix\neq x\}$. 

For an algebra $\B,$ $Nr_{\alpha}\B=\{x\in B: \Delta x\subseteq \alpha\}$ is a subuniverse of $\Rd_{\alpha}\B$.
Then the algebra $\Nr_{\alpha}\B\in \CA_{\alpha}$ with universe $Nr_{\alpha}\B$ is called the neat $\alpha$ reduct of $\B$.

For $K\subseteq \CA_{\beta}$ and $\alpha<\beta$,
$\Nr_{\alpha}K=\{\Nr_{\alpha}\B: \B\in K\}$.

The neat embedding theorem says that  for any $\alpha$ and any $\A\in CA_{\alpha}$,
$\A$ is representable iff there exists a $\B\in CA_{\alpha+\omega}$ and an injective homomorphism $:\A\to \Nr_\B$ 
called a neat embedding. The notion of neat reducts and neat embeddng is lengthly discussed in \cite{Sayed}.
We recall the conditions from \cite{AU} and \cite{stud}; for an intuitive explanation for these seemingly complicated conditions, the reader is 
referred to either reference:
%; in the latter the conditions were formulated 
%in the abstract context of a system of varities definable by a schema, of which cylindric algebras are a prime example:

\begin{definition} Let $\A\in \RCA_{\alpha}$. Then $\A$ has the $UNEP$ (short for unique neat embedding property) 
if for all $\A'\in \CA_{\alpha}$, $\B$, $\B'\in \CA_{\alpha+\omega},$
isomorphism $i:\A\to \A'$, embeddings  $e_A:\A\to \Nr_{\alpha}\B$ and $e_{A'}:\A'\to \Nr_{\alpha}\B'$ such that $\Sg^{\B}e_A(A)=\B$ and $\Sg^{\B'}e_{A'}(A)'=\B'$, there exists
an isomorphism $\bar{i}:\B\to \B'$ such that $\bar{i}\circ e_A=e_{A'}\circ i$. 
\end{definition}

\begin{definition} Let $\A\in \RCA_{\alpha}$. Then $\A$ has the $NS$ property (short for neat reducts commuting with forming subalgebras) 
if for all $\B\in \CA_{\alpha+\omega}$ if $\A\subseteq \Nr_{\alpha}\B$ then for all 
$X\subseteq A,$ $\Sg^{\A}X=\Nr_{\alpha}\Sg^{\B}X$.
\end{definition}
We recall the definition of the amalgamation base and the super amalgamation base:

\begin{definition} Let $L$ be a class of algebras. 
\begin{enumroman}
\item  $\A_0\in L$ is in the amalgamation base of $L$ if for all $\A_1, \A_2\in L$ and monomorphisms 
$i_1:\A_0\to \A_1$ $i_2:\A_0\to \A_2$ 
there exist $\D\in L$
and monomorphisms $m_1:\A_1\to \D$ and $m_2:\A_2\to \D$ such that $m_1\circ i_1=m_2\circ i_2$. 
\item Let everything be as in (i) and assume that the algebras considered have a Boolean reduct. If in addition, $(\forall x\in A_j)(\forall y\in A_k)
(m_j(x)\leq m_k(y)\implies (\exists z\in A_0)(x\leq i_j(z)\land i_k(z) \leq y))$
where $\{j,k\}=\{1,2\}$, then we say that $\A_0$ lies in the super amalgamation base of $L$. 
%Note that $K$ has the (super) amalgamation property, if the (super) amalgam base of $L$ coincides with $L$.
\end{enumroman}
\end{definition}
As pointed out before, we will characterize the amalgamation base of $\RCA_{\alpha}$ using neat embeddings. 
For this we need the next two crucial 
lemmas, on which the proof hinges. These two lemmas are  missing from \cite{AU} and \cite{Stud}.

\begin{lemma} Let $\A, \B$ be algebras having the same similarity type. Then the following two conditions are equivalent:
\begin{enumroman}
\item $h:\A\to \B$ is a homomorphism
\item $h$ is a subalgeba of $\A\times \B$ and $h$ is a function with $Domh=A$
\end{enumroman}
\end{lemma}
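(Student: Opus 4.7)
The statement is essentially the classical fact that a function is a homomorphism if and only if its graph is a subalgebra of the product, so my plan is to identify $h$ with its graph $\{(a, h(a)) : a \in A\} \subseteq A \times B$ and verify closure under the operations in both directions.

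For (i) $\Rightarrow$ (ii), I would assume $h : \A \to \B$ is a homomorphism and check that its graph is closed under every (say $n$-ary) operation symbol $f$ of the common similarity type. Given elements $(a_1, h(a_1)), \dots, (a_n, h(a_n))$ in the graph, the pointwise definition of operations in $\A \times \B$ gives
\[
f^{\A\times\B}\bigl((a_1,h(a_1)),\dots,(a_n,h(a_n))\bigr) = \bigl(f^{\A}(a_1,\dots,a_n),\ f^{\B}(h(a_1),\dots,h(a_n))\bigr),
\]
and the homomorphism property rewrites the second coordinate as $h(f^{\A}(a_1,\dots,a_n))$, placing the tuple back in the graph. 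The nullary case (constants) is the same computation with $n=0$. The fact that $h$ is a function with $\mathrm{Dom}\, h = A$ is built into the graph representation.

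For (ii) $\Rightarrow$ (i), I would take arbitrary $a_1,\dots,a_n \in A$; since $\mathrm{Dom}\, h = A$, each pair $(a_i, h(a_i))$ lies in $h$. Closure of $h$ under $f^{\A\times\B}$ yields
\[
\bigl(f^{\A}(a_1,\dots,a_n),\ f^{\B}(h(a_1),\dots,h(a_n))\bigr) \in h.
\]
Since $h$ is a function, the second coordinate is forced to equal $h$ applied to the first coordinate, giving $h(f^{\A}(a_1,\dots,a_n)) = f^{\B}(h(a_1),\dots,h(a_n))$, which is exactly the homomorphism condition.

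There is no real obstacle here; the only point to be careful about is applying the argument uniformly to operations of every arity (including nullary constants), and to phrase "$h$ is a subalgebra of $\A \times \B$" as: the graph of $h$, as a subset of $A \times B$, is a subuniverse. Once that identification is made, both implications are a one-line use of the product operation definition together with the defining property on the opposite side.
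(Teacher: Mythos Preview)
Your argument is correct and is exactly the standard proof of this classical universal-algebra fact. The paper does not give its own proof at all; it simply cites \cite{HMT1}~0.3.47, so your write-up is in fact more detailed than what appears in the paper.
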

\begin{demo}{Proof} \cite{HMT1} 0.3.47
\end{demo}
\begin{lemma}
Let $K=\{\A\in \CA_{\alpha+\omega}: \A=\Sg^{\A}\Nr_{\alpha}\A\}$. Let $\A,\B\in K$ and suppose that $f:\Nr_{\alpha}\A\to \Nr_{\alpha}\B$ is an isomorphism.
Then there exists an isomorphism $g:\A\to \B$ such that $f\subseteq g$
\end{lemma}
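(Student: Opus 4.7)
My plan is to realize $g$ as the subuniverse of $\A\times\B$ generated by $f$. Under the natural identification $\Nr_{\alpha}(\A\times\B)=\Nr_{\alpha}\A\times\Nr_{\alpha}\B$, the preceding lemma tells us that $f$ is a subalgebra of $\Nr_{\alpha}(\A\times\B)\subseteq\A\times\B$. Set $R:=\Sg^{\A\times\B}(f)$, working inside $\A\times\B\in\CA_{\alpha+\omega}$. The goal is to show that $R$ is the graph of an isomorphism $g:\A\to\B$ extending $f$.

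The first step is cheap. The projections $p_1,p_2:\A\times\B\to\A,\B$ are $\CA_{\alpha+\omega}$-homomorphisms, so they commute with $\Sg$; using $\A=\Sg^{\A}\Nr_{\alpha}\A$ we get $p_1(R)=\Sg^{\A}(p_1(f))=\Sg^{\A}\Nr_{\alpha}\A=\A$, and symmetrically $p_2(R)=\B$. The next step is to show $R$ is the graph of a bijection, equivalently that the kernel ideals of $p_1\restr R$ and $p_2\restr R$, namely $\{b\in B:(0,b)\in R\}$ and $\{a\in A:(a,0)\in R\}$, are both trivial. Once this is in hand, the preceding lemma applied to $g=R\subseteq\A\times\B$ yields that $g$ is a $\CA_{\alpha+\omega}$-homomorphism, and the same applied to $g^{-1}$ yields that $g^{-1}$ is also a homomorphism, so $g$ is the desired isomorphism; the inclusion $f\subseteq R=g$ is by construction.

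The main obstacle is the triviality of these kernel ideals. Every element of $R$ can be written as $(\tau^{\A}(\bar x),\tau^{\B}(f(\bar x)))$ for some $\CA_{\alpha+\omega}$-term $\tau$ and tuple $\bar x\in\Nr_{\alpha}\A$, so triviality reduces to the implication: for all such $\tau,\bar x$, if $\tau^{\A}(\bar x)=0$ then $\tau^{\B}(f(\bar x))=0$. This is where the hypothesis $\A,\B\in K$ is used essentially. Because each $x_i$ has dimension set in $\alpha$, the $\CA_{\alpha+\omega}$-identities let one rewrite $\tau^{\A}(\bar x)$ in a normal form in which the extra cylindrifications $\cyl{i}$ with $i\in[\alpha,\alpha+\omega)$ are pushed outward, until the internal computation takes place entirely inside $\Nr_{\alpha}\A$. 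This reduces $\tau^{\A}(\bar x)=0$ to an equation in $\Nr_{\alpha}\A$, which transfers via the $\CA_{\alpha}$-isomorphism $f$ to the analogous equation in $\Nr_{\alpha}\B$; reassembling yields $\tau^{\B}(f(\bar x))=0$. A symmetric argument, using $f^{-1}$ in place of $f$, handles the kernel of $p_2\restr R$.
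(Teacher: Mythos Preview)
Your overall architecture is exactly the paper's: set $g=\Sg^{\A\times\B}f$ and show it is the graph of an isomorphism. The domain/range computation and the reduction of functionality to the implication ``$\tau^{\A}(\bar x)=0\Rightarrow\tau^{\B}(f(\bar x))=0$'' are fine. The gap is in how you justify that implication.

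Your claimed normal form---``push the cylindrifications ${\sf c}_i$, $i\in[\alpha,\alpha+\omega)$, outward until the internal computation lies entirely in $\Nr_\alpha\A$''---is not a syntactic manipulation one can carry out with the $\CA$ identities in any routine way, and taken literally it cannot be right: if the outermost operations are ${\sf c}_i$'s with $i\geq\alpha$ and the inside already lies in $\Nr_\alpha\A$, then those ${\sf c}_i$'s act trivially and the whole element would lie in $\Nr_\alpha\A$, which is false for a generic element of $\Sg^{\A}\Nr_\alpha\A$. What you really need is the semantic statement that any element of $\Sg^{\A\times\B}f$ which happens to lie in $\Nr_\alpha(\A\times\B)$ already lies in $\Sg^{\Nr_\alpha(\A\times\B)}f=f$; equivalently, $\Nr_\alpha\Sg^{\A\times\B}f=\Sg^{\Nr_\alpha(\A\times\B)}f$. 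This commutation of $\Nr_\alpha$ with $\Sg$ is not free: it holds because $\A,\B\in K$ forces every element to have cofinitely many indices $\geq\alpha$ outside its dimension set, so $\A\times\B\in\Dc_{\alpha+\omega}$, and for dimension\nobreakdash-complemented algebras this commutation is a known (and non-trivial) theorem. Your write-up neither cites nor proves this, and that is precisely the content of the lemma.

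The paper's proof makes this explicit. It first isolates the claim $(\ast)$: if $(a,b)\in g$ and ${\sf c}_k(a,b)=(a,b)$ for all $k\in\alpha+\omega\setminus\alpha$, then $(a,b)\in f$; this is exactly $\Nr_\alpha g=f$, obtained from $\A\times\B\in\Dc_{\alpha+\omega}$. Then, rather than analyzing arbitrary terms, it uses a short symmetric-difference trick: from $(x,y),(x,z)\in g$ one gets $(0,{\sf c}_{(\Gamma)}(y\Delta z))\in g\cap\Nr_\alpha(\A\times\B)$ for a suitable finite $\Gamma\subseteq[\alpha,\alpha+\omega)$, whence by $(\ast)$ this pair lies in $f$, forcing ${\sf c}_{(\Gamma)}(y\Delta z)=f(0)=0$ and hence $y=z$. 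If you replace your normal-form paragraph by an appeal to $\Dc_{\alpha+\omega}$ and this commutation fact, your argument becomes complete and coincides with the paper's.
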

\begin{demo}{Proof} Let $g=\Sg^{\A\times \B}f$. It suffices to show, by the previous lemma, 
that $g$ is a one to one function with domain $A$.
$$Dom g=Dom\Sg^{\A\times \B}f=\Sg^{\A}Domf=\Sg^{\A}\Nr_{\alpha}\A=\A.$$
By symmetry it is enough to show that $g$ is a function.  We first prove the following (*)
 $$ \text { If } (a,b)\in g\text { and }  {\sf c}_k(a,b)=(a,b)\text { for all } k\in \alpha+\omega\sim \alpha, \text { then } f(a)=b.$$
Indeed,
$$(a,b)\in \Nr_{\alpha}\Sg^{\A\times \B}f=\Sg^{\Nr_{\alpha}(\A\times \B)}f=\Sg^{\Nr_{\alpha}\A\times \Nr_{\alpha}\B}f=f.$$
Here we are using that $\A\times \B\in \Dc_{\alpha+\omega}$, so that  $\Nr_{\alpha}\Sg^{\A\times \B}f=\Sg^{\Nr_{\alpha}(\A\times \B)}f.$
Now suppose that $(x,y), (x,z)\in g$.
Let $k\in \alpha+\omega\sim \alpha.$ Let $\Delta$ denote symmetric difference. Then
$$(0, {\sf c}_k(y\Delta z))=({\sf c}_k0, {\sf c}_k(y\Delta z))={\sf c}_k(0,y\Delta z)={\sf c}_k((x,y)\Delta(x,z))\in g.$$
Also,
$${\sf c}_k(0, {\sf c}_k(y\Delta z))=(0,{\sf c}_k(y\Delta z)).$$ 
Thus by (*) we have  $$f(0)={\sf c}_k(y\Delta z), \text { for any } k\in \alpha+\omega\sim \alpha.$$
Hence ${\sf c}_k(y\Delta z)=0$ and so $y=z$.

\end{demo}.
%The next Theorem answers a question of Monk formulated in \cite{AMN} problem 45, p.739.

The next main theorem, follows immediately from the previous lemma, together with theorem 12, p. 33 in \cite{Stud}
\begin{theorem} Let $\alpha$ be an ordinal. Let $\C\in \RCA_{\alpha}$.
\begin{enumroman}
\item $\C$ has $UNEP$ if and only if $\C\in APbase$.
\item $\C$ has $UNEP$ and $NS,$  if and only if $\C\in SUPAPbase$.
\end{enumroman}
\end{theorem}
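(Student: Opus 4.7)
The strategy is to reduce the theorem to Theorem~12 of \cite{Stud}, which proves both equivalences in the far more general setting of systems of varieties definable by schemes, but only under one extra semantic hypothesis that cannot be handled at that level of generality. That hypothesis is exactly the conclusion of the lemma just proved: any isomorphism $f:\Nr_\alpha\A\to\Nr_\alpha\B$ between the neat $\alpha$-reducts of two dilations $\A,\B\in\CA_{\alpha+\omega}$ satisfying $\A=\Sg^{\A}\Nr_\alpha\A$ and $\B=\Sg^{\B}\Nr_\alpha\B$ lifts to an isomorphism $g:\A\to\B$ with $f\subseteq g$. With that hypothesis discharged, Theorem~12 of \cite{Stud} applies verbatim to yield both (i) and (ii), and the task reduces to verifying that each direction can indeed be run through the scheme-theoretic argument.

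For the forward direction of~(i), starting from $\C$ with $UNEP$ and two embeddings $\C\hookrightarrow\A_1$, $\C\hookrightarrow\A_2$ in $\RCA_\alpha$, the plan is to apply the neat embedding theorem to dilate each $\A_i$ into $\Nr_\alpha\B_i$ with $\B_i\in\CA_{\alpha+\omega}$, and then shrink to $\B_i:=\Sg^{\B_i}A_i$ without loss of generality. The $UNEP$ of $\C$ identifies, up to isomorphism, the subdilation of $\C$ sitting inside $\B_1$ with the one sitting inside $\B_2$; amalgamation of $\B_1,\B_2$ over this common subdilation is available in $\CA_{\alpha+\omega}$ because the dimension is infinite, and taking the neat $\alpha$-reduct of the resulting dilation returns the required amalgam in $\RCA_\alpha$.

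For the converse direction of~(i), assume $\C\in APbase(\RCA_\alpha)$ and unpack the $UNEP$ data $i:\C\to\C'$, $e_\C:\C\to\Nr_\alpha\B$, $e_{\C'}:\C'\to\Nr_\alpha\B'$ with $\B=\Sg^{\B}e_\C(C)$ and $\B'=\Sg^{\B'}e_{\C'}(C')$. Amalgamating $\Nr_\alpha\B$ and $\Nr_\alpha\B'$ over $\C$ in $\RCA_\alpha$ and then dilating via the neat embedding theorem produces a single algebra in $\CA_{\alpha+\omega}$ into which both $\B$ and $\B'$ embed coherently over $e_\C$, $e_{\C'}$, $i$; the minimality conditions then force the two images to coincide, yielding an isomorphism $f:\Nr_\alpha\B\to\Nr_\alpha\B'$ that extends $e_{\C'}\circ i\circ e_\C^{-1}$. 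The preceding lemma then lifts $f$ to the required $\bar i:\B\to\B'$.

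Part~(ii) proceeds along identical lines, with the $NS$ property supplying the extra content needed to transport the super-amalgamation inequalities between dimensions $\alpha$ and $\alpha+\omega$: $NS$ ensures that witnesses $z$ produced inside the large dilation to certify $m_j(x)\leq m_k(y)$ can always be chosen to lie in the small neat reduct, and hence inside $\C$, rather than merely in a larger subalgebra of the dilation. The main technical obstacle I expect in the converse of~(i) is the book-keeping needed to ensure that the amalgamation in $\RCA_\alpha$ induces an \emph{isomorphism} (and not merely a pair of embeddings) between $\Nr_\alpha\B$ and $\Nr_\alpha\B'$ before the lifting lemma is invoked; this is precisely where the generation hypotheses $\B=\Sg^{\B}e_\C(C)$ and $\B'=\Sg^{\B'}e_{\C'}(C')$ become indispensable, and it is the step that \cite{Stud} could not complete without the new lemma.
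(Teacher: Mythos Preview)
Your proposal is correct and follows exactly the same route as the paper: both reduce the theorem to Theorem~12 of \cite{Stud}, with the crucial missing semantic hypothesis supplied by the preceding lifting lemma. The paper's own proof is in fact just the one-line citation ``\cite{Stud}'' together with the remark that the lemma is what makes that citation go through, so your elaboration of the forward and converse directions simply unpacks what Theorem~12 of \cite{Stud} actually does.
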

\begin{demo}{Proof}\cite{Stud}
\end{demo}

The following result is also mentioned in \cite{AFN} with only a sketchy proof.

\begin{theorem} Let $\alpha$ be infinite. Let $\D$ the full set algebra with unit $^{\alpha}\alpha$. Let $\M$ be its minimal subalgeba.
Then $\M\in SUPAPbase(\RCA_{\alpha})\sim APbase(\CA_{\alpha})$
\end{theorem}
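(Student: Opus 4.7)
The plan is to treat the two assertions separately. For the positive part $\M\in SUPAPbase(\RCA_{\alpha})$, I would apply the characterization theorem just proved, reducing the task to verifying that $\M$ has the $UNEP$ and the $NS$ properties. The decisive observation is that $\M$, being the minimal subalgebra of $\D$, is generated by the empty set in the cylindric signature. Hence whenever $e:\M\to \Nr_{\alpha}\B$ is an embedding with $\Sg^{\B}e(M)=\B$, the algebra $\B$ itself is generated by $\emptyset$ and must coincide with the minimal subalgebra of $\CA_{\alpha+\omega}$. Since minimal subalgebras in any fixed variety are canonically isomorphic by a unique isomorphism respecting all operations, $UNEP$ follows immediately. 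For $NS$, suppose $\M\subseteq \Nr_{\alpha}\B$ and $X\subseteq M$; minimality of $\M$ forces $\Sg^{\M}X=\M$, so the required equality reduces to $\Nr_{\alpha}\Sg^{\B}X=\M$, which in turn reduces to the general fact that the $\alpha$-neat reduct of the minimal $\CA_{\alpha+\omega}$ is exactly $\M$. This last statement I would establish by a term-level analysis: every element of dimension set contained in $\alpha$ inside the minimal $\CA_{\alpha+\omega}$ can be rewritten as a term in the cylindrifications and diagonals with indices in $\alpha$, hence already lies in $\M$.

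For the second assertion $\M\notin APbase(\CA_{\alpha})$, the plan is to exhibit two algebras $\A_1,\A_2\in \CA_{\alpha}$, necessarily non-representable, which contain $\M$ as a common subalgebra but cannot be amalgamated within $\CA_{\alpha}$. A Pigozzi--Comer style construction is the natural template: each $\A_i$ carries an ``extra atom'' $x_i$ sitting beneath a fixed element of $\M$ whose pattern of cylindrifications and interactions with diagonals differs between the two algebras, and any candidate amalgam in $\CA_{\alpha}$ would be forced to satisfy incompatible cylindric identities on the joint image of $x_1$ and $x_2$. Part (i) already guarantees that such an obstruction cannot occur if both $\A_i$ are representable, so the algebras must genuinely live outside $\RCA_{\alpha}$.

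The main obstacle is in this second step. Standard non-amalgamation examples for $\CA_{\alpha}$ are typically built over a sizeable common subalgebra, which gives ample room to encode the obstruction; here, however, the common subalgebra is the \emph{smallest} possible one, so every element witnessing non-amalgamation must be a term built only from $0$, $1$ and the $\diag ij$. I expect the construction to rely on $\alpha$ being infinite: in infinite dimensions the minimal subalgebra already contains infinitely many non-trivial products of diagonals, which is what makes it possible to ``hide'' two incompatible atoms below distinct such products and produce the contradictory identities in any alleged amalgam. The final verification should come down to exhibiting an explicit cylindric equation in such minimal-subalgebra terms that holds necessarily in any amalgam of $\A_1,\A_2$ in $\CA_{\alpha}$, yet fails in at least one of them.
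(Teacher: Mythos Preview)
Your approach to the first assertion is correct and amounts to unpacking, for this particular algebra, the chain of inclusions the paper simply quotes: $\Mn_{\alpha}\subseteq \Dc_{\alpha}\subseteq SUPAPbase(\RCA_{\alpha})$. Verifying $UNEP$ and $NS$ directly for a minimal algebra is exactly how one proves that minimal (more generally, dimension-complemented) algebras lie in the superamalgamation base, so the two routes coincide once unwound; yours is more self-contained, the paper's is shorter.

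For the second assertion there is both a misconception and a missed simplification. You assert that the two witnessing algebras must \emph{both} be non-representable; part~(i) only forces \emph{at least one} of them outside $\RCA_{\alpha}$. The paper exploits this: it takes $\D$ itself---the full set algebra you are already given, which is representable---as one of the two algebras, and cites the existence of a single non-representable $\C\in\CA_{\alpha}$ whose minimal subalgebra is isomorphic to $\M$ and which admits no common overalgebra with $\D$ in $\CA_{\alpha}$. This is a known construction from the literature, and reusing $\D$ removes half the work.

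By contrast, your proposed route---building two bespoke non-representable algebras with incompatible atoms sitting below diagonal terms, in the Pigozzi--Comer style---is precisely the difficulty you flag as the ``main obstacle'', and you do not actually carry it out. There is no evident reason it cannot be made to work, but it is strictly harder than the paper's argument, and your sketch does not yet contain the key idea (which specific minimal-subalgebra term the obstruction lives under, and which cylindric identity is violated). If you want to complete your version, look instead for a single non-representable $\C$ of the right characteristic that cannot coexist with $\D$; that is where the existing literature already does the job for you.
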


\begin{demo}{Proof} There is a non-representable algebra $\C$ whose minimal subalgebra is isomorphic to $\M$, and such that $\C$ and $\D$ 
cannot be embedded in a common algebra. By $\Mn_{\alpha}\subseteq \Dc_{\alpha}\subseteq SUPAPbase(\RCA_{\alpha})$ we are done.
\end{demo}

%First of all it is easy to see that $\M$ is simple and of characteristic $0$. 
%Our construction is based on an unpublished construction of Andras Simon.
%For $f\in {}^{\omega}\omega$,  $f\cup Id$ denotes the function with domain $\alpha$ such that $(f\cup Id)\upharpoonright \omega=f$ and $(f\cup Id)(i)=i$ for all $i\in \alpha\sim \omega$.
%$(i,j)$ denotes the transposition (on $\alpha$) that interchanges $i$ and $j$ and leaves every other thing fixed, and  $(i,j)(k,l)$ denotes the composition of two such transpositions.

\begin{theorem} The following conditions are equivalent
\begin{enumarab}
\item $\A$ has $UNEP$ and $NS$
\item $\A\in SAPbase(\RCA_{\alpha})$
\item $\A\in SUPAP(\RCA_{\alpha})$
\end{enumarab}
\end{theorem}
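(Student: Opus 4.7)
My plan is to leverage the previous theorem, which already records (1) $\Longleftrightarrow$ (3), that UNEP together with NS is equivalent to membership in $SUPAPbase(\RCA_{\alpha})$. The real content of the present theorem is therefore the equality $SAPbase(\RCA_{\alpha}) = SUPAPbase(\RCA_{\alpha})$. I split this into the routine direction (3) $\Longrightarrow$ (2) and the substantive direction (2) $\Longrightarrow$ (1); combined, these give the full cycle through (1).

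The implication (3) $\Longrightarrow$ (2) is a purely Boolean-algebraic observation valid in any class whose reducts are Boolean algebras with operators. Suppose $m_1,m_2$ superamalgamate the diagram $i_1:\A\to \A_1$, $i_2:\A\to \A_2$ inside some $\D\in \RCA_{\alpha}$, and that $m_1(x) = m_2(y)$. Applying the superamalgamation inequality to $m_1(x)\le m_2(y)$ yields $z_1\in A$ with $x\le i_1(z_1)$ and $i_2(z_1)\le y$; applying it to $m_2(y)\le m_1(x)$ yields $z_2\in A$ with $y\le i_2(z_2)$ and $i_1(z_2)\le x$. Since $i_1,i_2$ are monomorphisms, comparing these four inequalities forces $z_1=z_2$, and then $x=i_1(z_1)$ and $y=i_2(z_1)$; this is exactly the strong amalgamation clause.

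For the main direction (2) $\Longrightarrow$ (1), the half that $\A$ has UNEP is immediate: $SAPbase(\RCA_{\alpha})\subseteq APbase(\RCA_{\alpha})$, so Theorem 2.6(i) delivers UNEP at once. The nontrivial task is to extract NS from strong amalgamation. I would argue by contrapositive: assume NS fails, so that there exist $\B\in \CA_{\alpha+\omega}$ with $\A\subseteq \Nr_{\alpha}\B$, a set $X\subseteq A$, and an element $b\in \Nr_{\alpha}\Sg^{\B}X\smallsetminus \Sg^{\A}X$. The task is then to produce two $\alpha+\omega$-dimensional dilations of $\A$ whose neat reducts in $\RCA_{\alpha}$ admit no strong amalgam. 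The idea is to build two isomorphic copies $\B_1,\B_2$ sharing $\A$ as a common subalgebra but carrying ``twisted'' realisations of $b$; Lemma 2.5, applied after reducing to algebras satisfying $\B_i=\Sg^{\B_i}\Nr_{\alpha}\B_i$, controls the $\alpha+\omega$-isomorphism type of a dilation from its neat $\alpha$-reduct and so licenses the twisting. A hypothetical strong amalgam would then be forced by the $\CA$-equations generating $b$ from $X$ to identify the two images of $b$, while strong amalgamation simultaneously forbids identifying elements lying outside the image of $\A$; this is the contradiction.

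The principal obstacle is this final construction. Translating the abstract NS failure (the mere existence of $b$) into a concrete pair $(\B_1,\B_2)$ that provably resists strong amalgamation requires the rigidity supplied by Lemma 2.5; without it the twisted copy $\B_2$ risks collapsing back onto $\B_1$ and the two dilations would simply coincide over $\A$. Once that construction is in place the rest is mechanical, and the chain (2) $\Longrightarrow$ (1) $\Longleftrightarrow$ (3) $\Longrightarrow$ (2) closes the equivalence.
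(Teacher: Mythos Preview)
The paper does not give its own argument here; the entire proof is the one-line citation ``\cite{Stud} corollary 13.'' So there is no in-text proof to compare your proposal against, and your write-up is already more detailed than what the paper offers.

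On the substance of your plan: the implication (3) $\Rightarrow$ (2) is correct as written, and your reduction of (2) $\Rightarrow$ (1) via $SAPbase\subseteq APbase$ together with Theorem~2.6(i) to extract UNEP is the right decomposition. Where your proposal remains genuinely incomplete is the step (2) $\Rightarrow$ NS, and you flag this yourself. One concrete point to watch in that construction: the ``$\CA$-equations generating $b$ from $X$'' that you invoke to force identification of $b$ with its twisted copy live in the $(\alpha+\omega)$-dimensional signature, not in $\RCA_{\alpha}$. A putative strong amalgam $\D$ of the two neat reducts is only an $\RCA_{\alpha}$, so those equations are not directly available in $\D$; you first need to lift $\D$ to an $(\alpha+\omega)$-dilation and argue (using the UNEP you already have for $\A$) that the two embeddings $\Nr_{\alpha}\B_i\hookrightarrow \D$ lift compatibly. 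Only after that lifting does the term generating $b$ from $X$ force the two images of $b$ to coincide, yielding the contradiction with strong amalgamation. Without making this lifting step explicit, the sketch does not close; once it is in place, the remainder is indeed routine, as you say.
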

\begin{demo}{Proof} \cite{Stud} corollary 13
\end{demo}

\begin{theorem} 
$APbase(\RCA_{\alpha})\nsubseteq APbase(\RCA_{\alpha})$ and same for the super and strong amalgmation base.
\end{theorem}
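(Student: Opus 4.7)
The statement as printed must contain a typographical slip: the right-hand class has to be $APbase(\CA_{\alpha})$, since $APbase(\RCA_{\alpha})\subseteq APbase(\RCA_{\alpha})$ is tautological. The intended claim, matching question (2) of the introduction, is $APbase(\RCA_{\alpha})\nsubseteq APbase(\CA_{\alpha})$, with the analogous non-inclusions for $SAPbase$ and $SUPAPbase$. I shall take that as the statement to prove.

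My plan is to exhibit a single algebra that witnesses all three non-inclusions at once, using the previous theorem as a black box. Let $\M$ be the minimal subalgebra of the full $\Cs_{\alpha}$ on the unit $^{\alpha}\alpha$. The previous theorem asserts $\M\in SUPAPbase(\RCA_{\alpha})\setminus APbase(\CA_{\alpha})$. The only additional input required is the trivial monotonicity
\[
SUPAPbase(L)\subseteq SAPbase(L)\subseteq APbase(L),
\]
valid for every class $L$ of algebras with a Boolean reduct and immediate from the definitions: a superamalgam is in particular a strong amalgam, and a strong amalgam is in particular an amalgam. Applying this inclusion with $L=\RCA_{\alpha}$ upgrades the positive side to $\M\in SAPbase(\RCA_{\alpha})\cap APbase(\RCA_{\alpha})$. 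Applying it contrapositively with $L=\CA_{\alpha}$, the failure $\M\notin APbase(\CA_{\alpha})$ propagates upward to $\M\notin SAPbase(\CA_{\alpha})$ and $\M\notin SUPAPbase(\CA_{\alpha})$. Thus $\M$ simultaneously witnesses the three asserted non-inclusions.

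There is essentially no obstacle left to clear within the present proof: the genuinely non-trivial content has already been absorbed into the construction cited from the previous theorem, which produces a non-representable $\C\in \CA_{\alpha}$ whose minimal subalgebra is isomorphic to $\M$ and which cannot share any common $\CA_{\alpha}$-extension with the full set algebra $\D$. Once that construction is granted, the present theorem reduces to pure bookkeeping along the implications superamalgamation $\Rightarrow$ strong amalgamation $\Rightarrow$ amalgamation, together with the observation that membership in a base is inherited downward among these three notions on the $\RCA_{\alpha}$ side and non-membership is inherited upward on the $\CA_{\alpha}$ side.
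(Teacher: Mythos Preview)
Your proposal is correct and matches the paper's approach: the paper's one-line proof simply invokes the previous theorem's conclusion $SUPAPbase(\RCA_{\alpha})\nsubseteq APbase(\CA_{\alpha})$, leaving implicit exactly the monotonicity chain $SUPAPbase\subseteq SAPbase\subseteq APbase$ that you spell out. Your identification of the typo (the right-hand side should be $APbase(\CA_{\alpha})$) is also correct and consistent with the paper's proof and surrounding discussion.
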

\begin{demo}{Proof} Because $SUPAbase(\RCA_{\alpha})\nsubseteq APbase(\CA_{\alpha})$
\end{demo}

\section{Expansions of cylindric algebras}

Let $\alpha$ be an infinite ordinal and $G\subseteq {}^{\alpha}{\alpha}$. Let $\bold T$ denote the semigroup generated by $G$.
$[i|j]$ denotes the replacement that sends $i$ to $j$
and fixes the other elements. Let $[i,j]$ denote the transposition 
that interchanges $i$ and $j$. We assume that $G$ contains all replacements and transpositions.
Let $G^*$ denote the set of all finite words on $G$, i.e. 
$G^*=\bigcup_{n\in \omega}{}^nG$. For $u,w\in G^*$, 
$u^{\cap}w$, or simply $uw$, denotes the
{\it concatenation} of $u$ and $w$. Recall that $(G^*, ^{\cap})$ is the free semigroup
generated by $G$. We let $\hat{}:G^*\to \bold T$ denote the unique 
anti-homomorphism 
extending the identity inclusion $Id:G\to \bold T$. 
%For $i,j<\omega, i\neq j,$ let $${\sf s}_{[i|j]}={\sf c}_i(x\cdot {\sf d}_{ij}).$$
%(${\sf s}_{[i|j]}$, ${\sf c}_i$ and ${\sf d}_{ij}$ are operations in 
%our algebras to be specified in a while.)
Let $w\in G^*$. For $u\in G,$ let ${\sf s}_u(x)$ be a unary term. Suppose that 
$w=\langle w_0,\cdots ,w_{n-1}\rangle$, with $w_i\in G$. 
Then ${\sf s}_{w}(x)$ denotes the unary term 
${\sf s}_{w_{n-1}}(\cdots ({\sf s}_{w_0}(x))\cdots)$.  
$\Sigma_G$ denotes the following set of equations in one variable:
$$\Sigma_G:=\{{\sf s}_{u}(x)={\sf s}_{w}(x): u,w\in G^*\text { and } u\hat {}=w\hat {}\}.$$
In what follows $G$ will denote a set of transformations on $\alpha$ and $\bold T$ will denote the semigroup generated by $G.$

\begin{definition}

\begin{enumroman}

\item By a $G$ polyadic algebra of dimension $\alpha$,  we understand an algebra of the form
$$\A=\langle A,+,.,-,0,1,{\sf c}_i,{\sf s}_{\tau} \rangle_{i\in \alpha ,\tau\in \bold T}$$
where ${\sf c}_i$ ($i\in \alpha$) and ${\sf s}_{\tau}$ ($\tau\in \bold T)$ are unary 
operations on $A$ such that postulates $(P_1-P_{10})$ 
below hold for $\tau,\sigma\in \bold T$
and all $i,j,k\in \alpha$.

$(P_1)$ $\langle A,+,.,-,0,1\rangle$ is a boolean algebra, henceforth denoted
by $Bl\A$.

$(P_2)$ $x\leq {\sf c}_ix={\sf c}_i{\sf c}_ix,\ {\sf c}_i(x+y)={\sf c}_ix+{\sf c}_iy,\ {\sf c}_i(-{\sf c}_ix)=-{\sf c}_ix,\ {\sf c}_i{\sf c}_jx={\sf c}_j{\sf c}_ix$.

In other words ${\sf c}_i$ is an additive closure operator, and ${\sf c}_i,{\sf c}_j$ commute.

$(P_3)$ ${\sf s}_{\tau}$ is a boolean endomorphism.

$(P_4)$$\Sigma_G$. In particular ${\sf s}_{\tau}{\sf s}_{\sigma}x={\sf s}_{\tau\circ \sigma}x$
and ${\sf s}_{Id}x=x$.

$(P_5)$${\sf s}_{\tau}{\sf c}_ix={\sf s}_{\tau[i|j]}{\sf c}_ix$.

$\tau[i|j]$ is the transformation that agrees with $\tau$ on 
$\alpha\smallsetminus\{i\}$ and $\tau[i|j](i)=j$.

$(P_6)$ ${\sf s}_{\tau}{\sf c}_ix={\sf c}_j{\sf s}_{\tau}x$ if $\tau^{-1}(j)=\{i\}$.  

$(P_7)$ ${\sf c}_i{\sf s}_{[i|j]}x={\sf s}_{[i|j]}x$ if $i\neq j$

$(P_8)$ ${\sf s}_{[i|j]}{\sf c}_ix={\sf c}_ix$.

$(P_9)$ ${\sf s}_{[i|j]}{\sf c}_kx={\sf c}_k{\sf s}_{[i|j]}x$ whenever $k\notin \{i,j\}$.

$(P_{10})$ ${\sf c}_i{\sf s}_{[j|i]}x={\sf c}_j{\sf s}_{[i|j]}x$.

\item By a $G$ polyadic equality algebra, a $\GPEA_{\alpha}$ for short, 
we understand an algebra  of the form  

$$\B=\langle B,+,.,-,0,1, {\sf c}_i,{\sf s}_{\tau}, {\sf d}_{ij}\rangle_{i,j\in \omega,\tau\in \bold T}$$
such that 
${\sf d}_{ij}\in A$ for all $i,j\in \omega$ and 
$\langle A,+,.,-,0,1,{\sf c}_i, {\sf s}_{\tau} \rangle_{i\in \omega,\tau\in \bold T}$
is a $G$ algebra such that the following hold for all $k,l\in \omega$
and all $\tau\in \T:$

$(P_{11})$ ${\sf d}_{kk}=1$

$(P_{12})$ ${\sf s}_{\tau}{\sf d}_{kl}={\sf d}_{\tau(k), \tau(l)}.$

$(P_{13})$ $x\cdot {\sf d}_{kl}\leq {\sf s}_{[k|l]}x$

\end{enumroman}

\end{definition}
\begin{theorem} Let $G=\{[i|j], suc, pred\}$ and $V=Mod(1-13)$. Let 
$\A=\Fr_{\omega}V$ 
Let $X_1, X_2$ be subsets of the set of free generators. Let $a\in \Sg^{\Rd_{ca}\A}X_1$
and $b\in \Sg^{\Rd_{ca}\A}X_2$ such that $a\leq b$. Then there exists $c\in \Sg^{\A}(X_1\cap X_2)$ such that
$a\leq c\leq b.$
\end{theorem}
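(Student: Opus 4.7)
The plan is to read the statement as a Craig-style interpolation theorem and derive it from the super-amalgamation property of $V$ applied at the free subalgebra $\B_0=\Sg^{\A}(X_1\cap X_2)$. Set $\B_i=\Sg^{\A}X_i$ for $i\in\{1,2\}$. Because $\A$ is free on $\omega$ generators, each of $\B_0,\B_1,\B_2$ is itself a free $V$-algebra on the corresponding subset of generators; in particular $a\in\B_1$ and $b\in\B_2$, with $a\le b$ in $\A$.

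First I would argue that $\B_0$ lies in the super-amalgamation base of a suitable class of representable $V$-algebras. The characterization stated in Theorem 2.6 above, together with the remark following it that the proof transfers to $\GPEA$-style algebras via the scheme-definable-systems results of \cite{Stud}, reduces this to verifying UNEP and NS for $\B_0$. Both are natural for a free $V$-algebra: UNEP follows from the universal property of $\B_0$, since any two $(\alpha+\omega)$-dilations generated by an embedded copy of $\B_0$ are forced by freeness to agree on the images of the generators and so are isomorphic over $\B_0$; NS is handled by a direct syntactic argument showing that any term in the operations of $V$ over $X_1\cap X_2$ whose dimension set lies inside $\alpha$ can be rewritten so that no substitution ${\sf s}_\sigma$ moving an index outside $\alpha$ appears.

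Next I would super-amalgamate $\B_1$ and $\B_2$ over $\B_0$, producing $\D\in V$ and monomorphisms $m_i\colon\B_i\to\D$ that coincide on $\B_0$ and satisfy the super condition. By the universal property of $\A$ the compatible assignment $x\mapsto m_i(x)$ for $x\in X_i$ extends uniquely to a homomorphism $h\colon\A\to\D$ whose restriction to $\B_i$ is $m_i$. From $a\le b$ in $\A$ we obtain $m_1(a)=h(a)\le h(b)=m_2(b)$ in $\D$; the super condition then yields $c\in\B_0$ with $m_1(a)\le m_1(c)$ and $m_2(c)\le m_2(b)$. Since each $m_i$ is injective, these inequalities pull back to $a\le c\le b$ in $\A$, and by construction $c\in\Sg^{\A}(X_1\cap X_2)$, as required.

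The main obstacle is the transfer of Theorem 2.6 from $\RCA_\alpha$ to the present $\GPEA$-style signature with $G=\{[i|j],\mathrm{suc},\mathrm{pred}\}$: one has to check that the scheme-definability framework of \cite{Stud} genuinely covers this signature and that Lemma 2.5 has the obvious analogue extending an isomorphism between neat $\alpha$-reducts in the expanded language. Once these ingredients are in place, freeness of $\B_0$ supplies UNEP and NS with no additional labour, and the superamalgamation argument above produces the interpolant.
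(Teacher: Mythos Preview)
Your route is genuinely different from the paper's and, as written, has a real circularity problem together with two unjustified steps.

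The paper does not use superamalgamation to obtain the interpolant. It argues directly by contradiction: assuming no interpolant, it passes to an $\omega$-dilation, builds two Henkin ultrafilters $F_1,F_2$ that agree on $\Sg^{\A}(X_1\cap X_2)$, uses the diagonals in the common part to form a single quotient base $M=\alpha/E$, defines concrete homomorphisms $f_1,f_2$ into a weak set algebra on $^{\omega}M^{(q)}$, and then uses freeness of $\A$ only at the very end to glue $f_1\cup f_2$ into a single homomorphism $g$, yielding $g(a)\cap g(-b)\neq\emptyset$ against $a\leq b$. Interpolation is thus proved \emph{first}; the superamalgamation statement (the theorem immediately following in the paper) is then \emph{derived from} this interpolation result via the usual ideal argument. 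Your plan inverts this order, so within the paper's logical structure it is circular: you would be invoking a SUPAP that the paper only obtains as a corollary of the very theorem you are trying to prove.

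Even setting the circularity aside, the two ingredients you need are not established. Your UNEP argument is not correct: freeness of $\B_0$ gives you maps \emph{out of} $\B_0$, but UNEP asks for an isomorphism between two dilations $\B,\B'\in\CA_{\alpha+\omega}$ generated by copies of $\B_0$. Those dilations are not free, and the universal property of $\B_0$ says nothing about morphisms between them; in the $\CA$ case UNEP comes from Lemma~2.5 (extension of isomorphisms between neat reducts), not from freeness. Your NS argument (``a direct syntactic argument'' rewriting terms to avoid outside indices) is exactly the nontrivial content here: showing that any element of $\Nr_\alpha\Sg^{\B}X$ already lies in $\Sg^{\A}X$ is a definability/conservativity statement of the same strength as the interpolation you want. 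Finally, you correctly flag the transfer of Theorem~2.6 to the $\GPEA$ signature with $G=\{[i|j],\mathrm{suc},\mathrm{pred}\}$ as the main obstacle, but you do not discharge it; the paper avoids this entirely by working with the explicit representation.
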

\begin{demo}{Proof} 
Let $\B$ be an $\omega$ dilation.  Assume that there is no interpolant.
As above there exists no interpolant in $\B$. Let $F_1$ and $F_2$ be Henkin ultrafilters as in lemma 1.6.
Let $\alpha=\omega+\omega$. 
Now for all $x\in \Sg^{\B}(X_1\cap X_2)$ we have 
$x\in F_1$ iff $x\in F_2$. 
In particular, for $i,j<\alpha$, we have (++) 
$$\ \ {\sf d}_{ij}\in F_1\text { iff } {\sf d}_{ij}\in F_2.$$
Let $k\in \{1,2\}$.
$$E_k=\{(i,j)\in {}^2{\alpha}: {\sf d}_{ij}\in F_k\}.$$
$E_k$ is an equivalence relation on $\alpha$
$E_1=E_2=E$, say. 
Let $M= \alpha/E$ and for $i\in \omega$, let $q(i)=i/E$. 
Let $W$ be the weak space $^{\omega}M^{(q)}.$
For $h\in W,$ we write $h=\bar{\tau}$ if $\tau\in {}^{\omega}\alpha^{(Id)}$ is such that
$\tau(i)/E=h(i)$ for all $i\in \omega$. $\tau$ of course may
not be unique.
For $\tau\in{} ^{\omega}\alpha^{(Id)},$ let $\tau^+=
\tau\cup (Id\upharpoonright \alpha\setminus \omega.)$
That is $\tau^+$ agrees with $\tau$ on $\omega$ 
and is the identity on $\alpha\setminus \omega$.
For $m\in \{1,2\},$ let $\B_m=\Sg^{\Rd_{QPEA}\A}X_m$.
Define $f_m$ from $\B_m$ to the full weak set algebra with unit $W$ as follows:
$$f_m(x)=\{ \bar{\tau} \in W:  {\sf s}_{\tau^+}^{\A}x\in F_m\}, \text { for } x\in \B_m.$$ 
Then we claim that $f_m$ is a homomorphism from $\B_m$ into 
$\langle \B(W), {\sf C}_i, {\sf P}_{ij}, {\sf D}_{ij}\rangle_{i,j\in \omega}.$ Notice first that $Id \in f_1(a)\cap f_2(-c)$.
%We shall extensively use properties of substitutions of cylindric algebras from \cite{HMT1} (since our algebras have a cylindric reduct).
For brevity write $F$ for $F_m$, $f$ for $f_m$ and $\B$ for $\B_m$.
First thing to do is to make sure that the map $f$ is well defined. 
For this, it clearly suffices to show that for  
$\sigma, \tau\in {}^{\omega}{\alpha}^{(Id)}$ 
and $x\in A,$ if $\sigma(i)E \tau(i)$ for all $i<\omega,$
then $${\sf s}_{\tau^+}x\in F\text { iff } {\sf s}_{\sigma^+}x\in F.$$
This can be proved by induction on the cardinality of the (finite) set 
$$J=\{i\in \omega: \sigma(i)\neq \tau(i)\}.$$
If $J$ is empty, the result is obvious. 
Otherwise assume that $k\in J$. We introduce a piece of notation.
For $\eta\in V={}^{\alpha}\alpha^{Id}$ let  
$\eta(k\mapsto l)$ for the $\eta'$ that is the same as $\eta$ except
that $\eta'(k)=l.$ 
Now take any 
$$\lambda\in  \{\eta\in \alpha: (\sigma^+){^{-1}}\{\eta\}= (\tau^+){^{-1}}\{\eta\}=
\{\eta\}\}\smallsetminus \Delta x.$$
Recall that $\Delta x=\{i\in \alpha: c_ix\neq x\}$ and that $\alpha\setminus \Delta x$
is infinite. We freely use properties of substitutions for cylindric algebras.
We have by 1.3 (iv) \cite[1.11.11(i)(iv)]{HMT1}
and  1.3 (vi)that (a)
$${\sf s}_{\sigma^+}x={\sf s}_{\sigma k}^{\lambda}{\sf s}_{\sigma^+ (k\mapsto \lambda)}x.$$
Also by 1.3 (iii) (vi ) we have (b)
$${\sf s}_{\tau k}^{\lambda}({\sf d}_{\lambda, \sigma k}. {\sf s}_{\sigma^+} x)
={\sf d}_{\tau k, \sigma k} {\sf s}_{\sigma^+} x,$$
and (c)
$${\sf s}_{\tau k}^{\lambda}({\sf d}_{\lambda, \sigma k}.{\sf s}_{\sigma^+(k\mapsto \lambda)}x)$$
$$= {\sf d}_{\tau k,  \sigma k}.{\sf s}_{\sigma^+(k\mapsto \tau k)}x.$$

By 1.3 (i) we have (d)

$${\sf d}_{\lambda, \sigma k}.{\sf s}_{\sigma k}^{\lambda}{\sf s}_{{\sigma}^+(k\mapsto \lambda)}x=
{\sf d}_{\lambda, \sigma k}.{\sf s}_{{\sigma}^+(k\mapsto \lambda)}x$$

Then by (b), (a), (d) and (c), we get,

$${\sf d}_{\tau k, \sigma k}.{\sf s}_{\sigma}^+ x= 
{\sf s}_{\tau k}^{\lambda}({\sf d}_{\lambda,\sigma k}.{\sf s}_{\sigma}^+x)$$
$$={\sf s}_{\tau k}^{\lambda}({\sf d}_{\lambda, \sigma k}.{\sf s}_{\sigma k}^{\lambda}
{\sf s}_{{\sigma}^+(k\mapsto \lambda)}x)$$
$$={\sf s}_{\tau k}^{\lambda}({\sf d}_{\lambda, \sigma k}.{\sf s}_{{\sigma}^+(k\mapsto \lambda)}x)$$
$$= {\sf d}_{\tau k,  \sigma k}.{\sf s}_{\sigma^+(k\mapsto \tau k)}x.$$
By $F$ is a filter and $\tau k E\sigma k,$ we conclude that
$${\sf s}_{\sigma^+ }x\in F \text { iff }{\sf s}_{\sigma^+(k\mapsto \tau k)}x\in F.$$
The conclusion follows from the induction hypothesis.
We have proved that $f$ is well defined.
We now check that $f$ is a homomorphism, i.e. it 
preserves the operations.
Let $\sigma$ be a fixed element of $^{\omega}\alpha^{(Id)}$. 
Then since $F$ is maximal we have by 1.3 (iii) 
$$\bar{\sigma}\in f(x+y) \text { iff }
{\sf s}_{\sigma^+}(x+y)\in F\text { iff }{\sf s}_{\sigma^+}x+{\sf s}_{\sigma^+}y\in F$$
iff 
$${\sf s}_{\sigma^+} x \text { or } {\sf s}_{\sigma^+} y\in F \text 
{ iff }\bar{\sigma}\in f(x)\cup f(y).$$
We have proved that $f$ preserves the boolean join.
We now check complementation.
$$\bar{\sigma} \in f(-x) \text { iff }{\sf s}_{\sigma^+}(-x)\in F \text { iff }-{\sf s}_{\sigma^+x}\in F 
\text { iff }{\sf s}_{\sigma^+} x\notin F \text{ iff } \bar{\sigma}\notin f(x).$$
And diagonal elements. Let $k,l<\omega$ , then we have by 1.3 (ii)  
$$\sigma\in f{\sf d}_{kl}\text { iff } {\sf s}_{\sigma^+}{\sf d}_{kl}\in F \text { iff }
{\sf d}_{\sigma k, \sigma l}\in F
\text { iff }\sigma k E \sigma l \text { iff }$$
$$ \sigma k/E=\sigma l/E \text{ iff } 
\bar\sigma(k)=\bar\sigma(l)
\text{ iff } \bar{\sigma} \in d_{kl}.$$
Now cylindrifications. 

Let $k<\omega$. Let $\bar{\sigma}\in {\sf c}_kf(x)$. 
Then for some $\lambda<\alpha$
$$\bar{\sigma}(k \mapsto  \lambda/E)\in f(x)$$ 
hence 
$${\sf s}_{\sigma^+(k\mapsto \lambda)}x\in F$$ 
It follows from 1.3 (iii)  
and the inclusion $x\leq {\sf c}_kx$ that , 
$${\sf s}_{\sigma^+(k\mapsto \lambda)}{\sf c}_kx \in F$$
By 1.3 (vii) we have $s_{\sigma^+}{\sf c}_kx\in F.$
Thus ${\sf c}_kfx\subseteq f{\sf c}_kx.$

Now we prove the other inclusion.
Let $\bar\sigma\in f{\sf c}_kx$. Then 
$${\sf s}_{\sigma^+}{\sf c}_kx\in F.$$
Let $$\lambda\in \{\eta \in \alpha:  \sigma^{-1}\{\eta\}=\{\eta\}\}\smallsetminus \Delta x$$
Let $$\tau=\sigma\upharpoonright \alpha\smallsetminus \{k,\lambda\}\cup \{
(k,\lambda), (\lambda,k)\}.$$
Then by 1.3 (vii) (vi) we have
$${\sf c}_{\lambda}{\sf s}_{\tau} x={\sf s}_{\tau}{\sf c}_kx={\sf s}_{\sigma}{\sf c}_kx\in F.$$
By (3) there is some $u\notin \Delta(s_{\tau}x)$  
such that $${\sf s}_{u}^{\lambda}{\sf s}_{\tau}x\in F.$$
Thus by 1.3 (iv) and 1.3 (vi) we get
$${\sf s}_{\sigma^+(k\mapsto u)}x\in F$$
Hence $$\sigma(k\mapsto u)\in f(x).$$
Thus $\bar{\sigma}\in {\sf c}_kfx$.
Preserving substitutions follows from the definitions.
Recall that $W$ is  the weak space $^{\omega}M^{(q)}$
where $M=\alpha/E$ and $q(i)=i/E$ for every $i\in \omega.$
%Now $f(a)\neq 0$ because $a={\sf s}_{Id}a\in F$ . 
%Thus $q\in f(a)$.
%It is clear from (4) that $\bigcap f(X_i)=\emptyset$ for all $i<\kappa$.
%By this Theorem 1 is proved.

Let $f=f_1\cup f_2\upharpoonright X$. Then $f$ is a function since, be definition, $f_1$ and $f_2$ agree on $X_1\cap X_2$.
Moreover, by freeness,  $f$ extends to a homomorphism $g$ from $\A$ into 
$\langle \B(W), {\sf C}_i, {\sf S}_{\tau}, {\sf D}_{ij}\rangle_{i,j\in \omega,\tau\in [G]}$, that is $f\subseteq g$.
Since $a\in \Sg^{\Rd_{QPEA}\A}X_1$ then $g(a)=f_1(a)$ (by a straightforward induction, by noting that $g$ and $f_1$ are quasipolyadic 
homomorphisms).
Similarly, $g(-c) =f_2(-c).$ Now $Id\in f_1(a)\cap f_2(-c)$ hence 
$$g(a-c)=g(a)\cap g(-c)=f_1(a)\cap f_2(-c)\neq \emptyset.$$
This contradicts that $a\leq c$. The Theorem is proved. 
\end{demo}
\begin{theorem} Let $A,\B, \C\in V$, and assume that $\A,\B\subseteq  \C$ are cylindric subalgebras, then they have a superamalgam in $\CA$. 
superaamalgam over the cylindric reduct.
\end{theorem}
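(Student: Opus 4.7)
The plan is to extract the superamalgam from the interpolation property of the previous theorem, via Pigozzi's classical correspondence between interpolation and amalgamation. Writing $\A_0:=\Rd_{ca}\A\cap\Rd_{ca}\B$ for the common cylindric subalgebra inside $\Rd_{ca}\C$, the goal is to produce a $\D\in\CA$ together with $\CA$-monomorphisms $m_1:\Rd_{ca}\A\to\D$ and $m_2:\Rd_{ca}\B\to\D$ agreeing on $\A_0$ and satisfying the super-condition; the fact that $\A,\B$ already fit inside the common $V$-algebra $\C$ will be used only to match up the $V$-structures needed to invoke the preceding interpolation result.

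First I would realise $\A$ and $\B$ as $V$-homomorphic images of cylindric subalgebras of a free $V$-algebra $\F=\Fr_X V$ on a set of generators $X=X_1\cup X_2$, where $X_1\cap X_2$ is chosen so that its $V$-hull surjects onto $\A_0$ via both intended maps. Write $\pi_1:\Sg^\F X_1\twoheadrightarrow\A$ and $\pi_2:\Sg^\F X_2\twoheadrightarrow\B$ for the induced surjections, which agree on $\Sg^\F(X_1\cap X_2)$. Let $J$ be the $\CA$-ideal of $\Rd_{ca}\F$ generated by $\ker(\pi_1)\cup\ker(\pi_2)$, set $\D:=\Rd_{ca}\F/J\in\CA$, and define the $m_i$ by factoring the inclusions $\Sg^\F X_i\hookrightarrow\F$ through the quotient; by construction $m_1|\A_0=m_2|\A_0$.

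The heart of the argument is to verify that the $m_i$ are monomorphisms and that the super-condition holds; both reduce to the same computation. Given $a\in\Sg^{\Rd_{ca}\F}X_1$ and $b\in\Sg^{\Rd_{ca}\F}X_2$ with $a\cdot -b\in J$, use that $\CA$-ideals are closed under cylindrifications to absorb a finite join of cylindrified generators of $J$ into $a$ and $b$, producing $a',b'$ in the same cylindric subalgebras with $a'\leq b'$ in $\F$ and $\pi_1(a')=\pi_1(a)$, $\pi_2(b')=\pi_2(b)$. The previous theorem then supplies an interpolant $c\in\Sg^\F(X_1\cap X_2)$ with $a'\leq c\leq b'$; its common image under $\pi_1=\pi_2$ lies in $\A_0$ and witnesses the super-condition, while injectivity of $m_1$ is the special case $b=0$ (and symmetrically for $m_2$).

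The main obstacle is the ideal-absorption step: moving cleanly from $a\cdot -b\in J$ to a direct inequality $a'\leq b'$ in $\F$ amenable to interpolation, while checking that neither $\pi_1(a)$ nor $\pi_2(b)$ is altered by the absorption. A subsidiary point is extending the interpolation theorem from $\Fr_\omega V$ to $\Fr_X V$ for possibly larger $X$, but the Henkin-ultrafilter construction in the preceding proof does not depend on $|X|=\omega$ in any essential way and transfers verbatim, so this is a bookkeeping rather than a conceptual issue.
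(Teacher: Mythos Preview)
Your proposal is correct and follows essentially the same route as the paper: realise $\A$ and $\B$ as quotients of cylindric subalgebras $\Fr^{X_1},\Fr^{X_2}$ of a free $V$-algebra, form the cylindric ideal $P$ generated by the two kernels, take $\D=\Rd_{ca}\Fr/P$, and use the interpolation theorem (Theorem~3.2) both to show $P\cap\Fr^{X_i}$ equals the respective kernel (injectivity) and to produce the witnessing element for the super-condition. Your ``ideal-absorption'' step is exactly the paper's move from $x\in\Ig(M\cup N)$ to $x\leq b+c$ with $b\in M$, $c\in N$, and you are right that the extension of interpolation from $\Fr_\omega V$ to $\Fr_\mu V$ is a bookkeeping matter the paper leaves implicit.
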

Let $X$ be a set of generators of $\A$ using only the cylindric operations, and $Y$ be that of $\B$.
Let $I=|X|$ and $J=|Y|$, and $|I\cup J|=\mu$, and let consider $\Fr_{\mu}V$.
Let $\Fr^{I}$ be the subalgebra of $\Fr$ generated by $\{x_i: i\in I\}$, using only the cylindric operations, and let 
$\Fr^J$ be the subalgebra generated by $\{y_i: i \in J\}$ using only the cylindric operations.
%To avoid cumbersome notation we write $\xi i$ instead of $\xi i/Cr_{\mu}^{\rho}\QPEA_{\beta}$ and similarly for $\xi j$.
%No confusion is likely to ensue. 
Then there exists  a surjective homomorphism from $\Rd_{ca}\Fr^{I}$ onto $\Rd_{ca}\A$
such that
$\xi i\mapsto a_i$ $(i\in I)$
and similarly 
a homomorphism from $\Rd_{ca}\Fr^{J}$ into $\Rd_{ca}\B$ such that
$\xi j\mapsto b_j$ $(j\in I).$
Therefore there exist ideals cylindric ideals $M$ and $N$ ideals of the cylindric algebras $\Fr^I$ and $\Fr^J$ respectively, and 
there exist isomorphisms 
$$m:\Fr^{I}/M\to \Rd_{ca}\A\text { and }n:\Fr^J/N\to \Rd_{ca}\B$$ such that
$$m(\xi i/M)=a_i\text { and }(\xi i/N)=b_i.$$
It is esay to check that $M\cap \Fr^{(I\cap J)}=N\cap \Fr^{(I\cap J)}.$
Now let $x\in \Ig(M\cup N)\cap \Fr^{I}$.
Then there exist $b\in M$ and $c\in N$ such that $x\leq b+c$. Thus $x-b\leq c$.
But $x-b\in \Fr^{(I)}$ and $c\in \Fr^{J}$, it follows that there exists an interpolant 
$d\in \Fr^{(I\cap J)}$ such that $x-b\leq d\leq c$. We have $d\in N$
therefore $d\in M$, and since $x\leq d+b$, therefore $x\in M$.
It follows that
$\Ig(M\cup N)\cap \Fr^{I}=M$
and similarly
$\Ig(M\cup N)\cap \Fr^{J}=N$.
In particular $P=\Ig(M\cup N)$ is a  proper cylindric ideal of $\Fr$.
Let $\D=\Fr/P$.
Let $k:\Fr^{I}/M\to \Fr/P$ be defined by $k(a/M)=a/P$
and $h:\Fr^{J}/M\to \Fr/P$ by $h(a/N)=a/P$. Then 
$k\circ m$ and $h\circ n$ are one to one and 
$k\circ m \circ f=h\circ n\circ g$. 
We now prove that $\Fr/P$ is actually a
superamalgam. i.e we prove that $K$ has the superamalgamation
property. Assume that $k\circ m(a)\leq h\circ n(b)$. There exists
$x\in \Fr^{I}$ such that $x/P=k(m(a))$ and $m(a)=x/M$. Also there
exists $z\in \Fr^{J}$ such that $z/P=h(n(b))$ and $n(b)=z/N$. Now
$x/P\leq z/P$ hence $x-z\in P$. Therefore  there is an $r\in M$ and
an $s\in N$ such that $x-r\leq z+s$. Now $x-r\in \Fr^I$ and $z+s\in
\Fr^J,$ it follows that there is an interpolant  $u\in \Fr^{(I\cap
J)}$ such that $x-r\leq u\leq z+s$. Let $t\in C$ such that $m\circ
f(t)=u/M$ and $n\circ g(t)=u/N.$ We have  $x/P\leq u/P\leq z/P$. Now
$m(f(t))=u/M\geq x/M=m(a).$ Thus $f(t)\geq a$. Similarly
$n(g(t))=u/N\leq z/N=n(b)$, hence $g(t)\leq b$. By total symmetry,
we are done.

\section{Relation algebras}

For relation algebras it is known that for any
$K$ such that $$\RRA\subseteq K\subseteq \RA,$$ 
$K$ does not have $AP$, a result of Comer.

For relation algebras, the following is a result of Monk:

\begin{theorem} $\RRA=S\Ra\CA_{\omega}$
\end{theorem}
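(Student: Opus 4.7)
The plan is to prove both inclusions. $\RRA \subseteq S\Ra\CA_\omega$ is a concrete embedding, while $S\Ra\CA_\omega \subseteq \RRA$ is the substantive content of Monk's theorem and reduces, after a subalgebra observation, to the classical fact that locally finite $\omega$-dimensional cylindric algebras are representable.

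For the easy direction, given $\A \in \RRA$, I would first reduce to the case that $\A$ is a subalgebra of a proper relation algebra on some set $U$, i.e.\ of $\langle \mathcal{P}(U^2), \cup, \cap, -, \emptyset, U^2, \mathrm{Id}_U, {;}, \breve{\ }\rangle$. I then embed $\A$ into $\Ra \C$, where $\C \in \Cs_\omega$ is the full cylindric set algebra with unit ${}^\omega U$, via $R \mapsto \hat{R} = \{s \in {}^\omega U : (s_0,s_1) \in R\}$. A routine verification shows $\Delta \hat{R} \subseteq \{0,1\}$ so $\hat{R} \in \Ra \C$, and that the map sends identity to ${\sf d}_{01}$, converse to ${\sf s}_{[0,1]} \hat{R}$, and composition to ${\sf c}_2({\sf s}^1_2 \hat{R} \cdot {\sf s}^0_2 \hat{S})$ -- precisely the defining terms of those operations in the relation-algebra reduct.

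For the hard direction, suppose $\A \subseteq \Ra \C$ with $\C \in \CA_\omega$. The key observation is that the cylindric subalgebra $\C' := \Sg^\C A$ generated inside $\C$ by $A$ is locally finite: every $a \in A$ has cylindric support $\Delta a \subseteq \{0,1\}$ by hypothesis, and the $\CA$-operations preserve finite support (${\sf c}_i$ only removes $i$ from $\Delta$, ${\sf d}_{ij}$ has $\Delta \subseteq \{i,j\}$, and Boolean operations take unions). Hence $\C' \in \Lf_\omega$. Invoking the classical inclusion $\Lf_\omega \subseteq \RCA_\omega$ (proved by a Henkin-style argument in \cite{HMT1}), $\C'$ is representable, so there is an injective $\CA$-homomorphism $\C' \hookrightarrow \D$ into a product $\D$ of full cylindric set algebras of the shape described in the easy direction.

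Restricting such an embedding to $\Ra$ yields an injection $\Ra \C' \hookrightarrow \Ra \D$, and $\Ra$ commutes with products, so $\Ra \D$ is a product of algebras of the form $\Ra \Cs_\omega(U_i)$, each of which is (via the inverse of the easy direction's map) a proper relation algebra on $U_i$ and hence in $\RRA$. Since $\RRA$ is closed under $S$ and $P$, we obtain $\Ra \C' \in \RRA$; and since $\A \subseteq \Ra \C'$ and $\RRA$ is closed under $S$, we conclude $\A \in \RRA$. The main obstacle -- and the only genuinely deep step -- is the classical representability of locally finite $\omega$-dimensional $\CA$s, $\Lf_\omega \subseteq \RCA_\omega$, which is itself a non-trivial Henkin-style theorem; once it is invoked, everything else amounts to checking that the $\Ra$ functor is compatible with subalgebras, products, and the concrete set representations.
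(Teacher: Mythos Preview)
Your sketch is correct and is essentially the standard proof of Monk's theorem. Note, however, that the paper does not actually prove this statement: it merely records it as ``a result of Monk'' and moves on, using it as a known black box for the subsequent development on $\QRA$. So there is no proof in the paper to compare your argument against.

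One small point worth tightening: in the easy direction you ``reduce to the case that $\A$ is a subalgebra of a proper relation algebra on some set $U$'', but by definition $\RRA$ only gives you $\A$ as a subalgebra of a \emph{product} of such algebras. The reduction is fine, but you should say explicitly why: $\Ra$ commutes with products and $\CA_\omega$ is closed under products, so once each full $\wp(U_i^2)$ is shown to be (isomorphic to) $\Ra$ of a full $\Cs_\omega$, the product lies in $\Ra\CA_\omega$ and $\A$ in $S\Ra\CA_\omega$. Otherwise the argument is sound, and the identification of $\Lf_\omega\subseteq\RCA_\omega$ as the one genuinely deep ingredient is exactly right.
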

$UNEP$ and $NS$ can be defined like the $\CA$ case, and the same characterization of bases hold.
Now we give a natural clas of $RA$'s that has $NS$ and $UNEP$.

We need some preparing. The pairing technique due to Alfred Tarski, and substantially generalized by Istvan N\;emeti, consists of  defining 
a pair of quasi-projections.
$p_0$ and $p_1$ 
so that in a model $\cal M$ say of  a certain sentence $\pi$, where $\pi$ is built out of these quasi-projections, $p_0$ and $p_1$
are functions and for any element $a,b\in {\cal M}$, there is a $c$ 
such that $p_0$ and $p_1$
map $c$ to $a$ and $b,$ respectively. 
We can think of $c$ as representing the ordered pair $(a,b)$ 
and $p_0$ and $p_1$ are the functions that project the ordered pair onto 
its first and second coordinates.  

Such a technique, ever since introduced by Tarski, to formalize, and indeed succesfully so, set theory, in the calculas of relations 
manifested itself in several re-incarnations in the literature some of which are quite subtle and 
sophisticated.
One is Simon's proof of the representability of quasi-relation algebras $\QRA$ 
(relation algebrs with quasi projections) using a neat embedding theorem for cylindric algebras \cite{Andras}. 
The proof consists of stimulating a neat embeding theorem
via the quasi-projections, in short it is actually a 
{\it a completeness proof}. The idea implemented  is that quasi-projections, on the one hand, generate extra dimensions, and on the other it has 
control over such a stretching. The latter property does not come across very much in Simon's proof, but below we will give an exact rigorous 
meaning to such property. This method can is used by Simon to
apply a Henkin completeness construction. We shall use Simon's technique to further show that $\QRA$ has the superamalgamation property; 
this is utterly unsurprising because Henkin constructions also prove interpolation theorems. This is the case, e.g. 
for first order logics and several of its non-trivial extensions arising from the process of algebraising first order logic, 
by dropping the condition of local finiteness reflecting the fact
that formulas contain only finitely many (free) variables. A striking example in this connection is the algebras studied by Sain and Sayed Ahmed 
\cite{Sain}, \cite{Sayed}. This last condition of loal finiteness is  unwarrented from the algebraic point of view, 
because it prevents an equational formalism of first order logic.

The view, of capturing extra dimensions, using also quai-projections comes along also very much so, 
in N\'emetis directed cylindric algebras (introduced as a $\CA$ counterpart of 
$\QRA$). In those, S\'agi defined quasi-projections also to achieve a completeness theorem for higher order logics.
The technique used is similar to Maddux's proof of representation of ${\QRA}$s, which further emphasizes the correlation.
%This theorem has enormous algebraic repercussions concerning free cylindric algebras.
%and relation algebras. We now turn to quasi-projective relation algebras; but first a slight modification of the definition of neat reducts for cylindric 
%algebras.
%\section{Quasi-projective relation algebras}
We have alreardy made the notion of extra dimensions explicit. Its dual notion (in an exact categorial sense presented above)
that  of compressing dimensions, or taking neat reducts.

The definition of neat reducts in the standard definition adopted by Henkin, Monk and Tarski in their monograph, 
deals only with inisial segements, but it proves useful to widen the definition a little allowing 
arbitary substs of $\alpha$ not just initial segments. This is no more than a notational tactic.

This will enable us, using a deep result of Simon, to present an equivalence between algebras that are finite dimensional. We infer from our definition 
that such algebras, referred to in the literature as {\it Directed Cylindric Algebras}, actually 
belong to the polyadic paradigm, in this context the neat reduct functor establishes an equivalence between all dimensions.
To achieve this equivalent we use a transient category, namey, that of quasi-projective relation algebras.

\begin{definition} Let ${C}\in \CA_{\alpha}$ and $I\subseteq \alpha$, and let $\beta$ be the order type of $I$. Then
$$Nr_IC=\{x\in C: c_ix=x \textrm{ for all } i\in \alpha\sim I\}.$$
$$\Nr_{I}{\C}=(Nr_IC, +, \cdot ,-, 0,1, c_{\rho_i}, d_{\rho_i,\rho_j})_{i,j<\beta},$$
where $\beta$ is the unique order preserving one-to-one map from $\beta$ onto $I$, and all the operations 
are the restrictions of the corresponding operations on $C$. When $I=\{i_0,\ldots i_{k-1}\}$ 
we write $\Nr_{i_0,\ldots i_{k-1}}\C$. If $I$ is an initial segment of $\alpha$, $\beta$ say, we write $\Nr_{\beta}\C$.
\end{definition}
Similar to taking the $n$ neat reduct of a $\CA$, $\A$ in a higher dimension, is taking its $\Ra$ reduct, its relation algebra reduct.
This has unverse consisting of the $2$ dimensional elements of $\A$, and composition and converse are defined using one spare dimension.
A slight generalization, modulo a reshufflig of the indicies: 

\begin{definition}\label{RA} For $n\geq 3$, the relation algebra reduct of $\C\in \CA_n$ is the algebra
$$\Ra\C=(Nr_{n-2, n-1}C, +, \cdot,  1, ;, \breve{}, 1').$$ 
where $1'=d_{n-2,n-1}$, $\breve{x}=s_{n-1}^0s_{n-1}^{n-2}s_0^{n-1}x$ and $x;y=c_0(s_0^{n-1}x. s_0^{n-2}y)$. 
Here $s_i^j(x)=c_i(x\cdot d_{ij})$ when $i\neq q$ and $s_i^i(x)=x.$
\end{definition}
But what is not obvious at all is that an $\RA$ has a $\CA_n$ reduct for $n\geq 3$. 
But Simon showed that certain relations algebras do; namely the $\QRA$s.

\begin{definition} A relation algebra $\B$ is a $\QRA$ 
if there are elements $p,q$ in $\B$ satisfying the following equations:
\begin{enumarab}
\item $\breve{p};p\leq 1', q; q\leq 1;$
\item  $\breve{p};q=1.$
\end{enumarab}
\end{definition}
In this case we say that $\B$ is a $\QRA$  with quasi-projections $p$ and $q$. 
To construct cylindric algebras of higher dimensions 'sitting' in a $\QRA$, 
we need to define certain terms. seemingly rather complicated, their intuitive meaning 
is not so hard to grasp.
\begin{definition} Let $x\in\B\in \RA$, then $dom(x)=1';(x;\breve{x})$ and $ran(x)=1';(\breve{x}; x)$, $x^0=1'$, $x^{n+1}=x^n;x$. $x$ 
is a functional element if $x;\breve{x}\leq 1'$.
\end{definition}
Given a $\QRA$, which we denote by $\bold Q$, we have quasi-projections $p$ and $q$ as mentioned above. 
Next we define certain terms in ${\bf Q}$, cf. \cite{Andras}:

%Let $n\geq 4$. Then set \cite{Andras}:

$$\epsilon^{n}=dom q^{n-1},$$
$$\pi_i^n=\epsilon^{n};q^i;p,  i<n-1, \pi_{n-1}^{(n)}=q^{n-1},$$
$$ \xi^{(n)}=\pi_i^{(n)}; \pi_i^{(n)},$$
$$ t_i^{(n)}=\prod_{i\neq j<n}\xi_j^{(n)}, t^{(n)}=\prod_{j<n}\xi_j^{(n)},$$
$$ c_i^{(n)}x=x;t_i^{(n)},$$
$$ d_{ij}^{(n)}=1;(\pi_i^{(n)}.\pi_j^{(n)}),$$
$$ 1^{(n)}=1;\epsilon^{(n)}.$$
and let
$$\B_n=(B_n, +, \cdot, -, 0,1^{(n)}, c_i^{(n)}, d_{ij}^{(n)})_{i,j<n},$$
where $B_n=\{x\in B: x=1;x; t^{(n)}\}.$
The intuitive meaning of those terms is explained in \cite{Andras}, right after their definition on p. 271.

\begin{theorem} Let $n>1$
\begin{enumerate} 
\item Then ${\B}_n$ is closed under the operations.
\item ${\B}_n$ is a $\CA_n$.
\end{enumerate}
\end{theorem}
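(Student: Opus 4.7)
\begin{demo}{Proof sketch} The plan is to verify the two clauses in order, following the Simon--N\'emeti template: the quasi-projections $p,q$ together with the terms $\epsilon^{(n)}, \pi_i^{(n)}, \xi_j^{(n)}, t_i^{(n)}, t^{(n)}$ set up an abstract ``set of $n$-tuples'' $\epsilon^{(n)}$ on which the $\pi_i^{(n)}$ behave as coordinate projections, so that $\B_n$ is to be read as the algebra of $n$-ary ``relations'' living over this set.

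Before addressing either clause I would isolate, as auxiliary lemmas proved once and for all by direct RA calculation from the defining equations, the following facts: (i) each $\pi_i^{(n)}$ is a \emph{functional} element, i.e.\ $\breve{\pi_i^{(n)}};\pi_i^{(n)}\leq 1'$, which follows by iterated use of $\breve{p};p\leq 1'$ and $\breve{q};q\leq 1'$ together with $\epsilon^{(n)}\leq 1'$; (ii) each $\xi_j^{(n)}$ is the equivalence relation ``same $j$-th projection'', in particular $\xi_j^{(n)}\geq 1'$, $\xi_j^{(n)};\xi_j^{(n)}=\xi_j^{(n)}$, and distinct $\xi_j^{(n)}$'s commute under $;$; (iii) consequently $t_i^{(n)};t_j^{(n)}=t_j^{(n)};t_i^{(n)}$, $t_i^{(n)};t_i^{(n)}=t_i^{(n)}$, and $1'\leq t_i^{(n)}\leq \epsilon^{(n)};1;\epsilon^{(n)}$.

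For (1), I check closure operation by operation. The constants $0$ and $1^{(n)}=1;\epsilon^{(n)}$ lie in $B_n$ by a short computation using $t^{(n)}\leq \epsilon^{(n)}$ and $1;\epsilon^{(n)};\epsilon^{(n)} = 1;\epsilon^{(n)}$; Boolean closure, with complementation taken relative to $1^{(n)}$, follows from RA-distributivity; and closure under $c_i^{(n)}x = x;t_i^{(n)}$ and under $d_{ij}^{(n)} = 1;(\pi_i^{(n)}\cdot\pi_j^{(n)})$ reduces, after invoking (ii) and (iii), to the identity $t_i^{(n)};t^{(n)} = t_i^{(n)}\cdot\epsilon^{(n)} = t_i^{(n)}$, which is obtained by collapsing the product of equivalence relations.

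For (2), I verify the $\CA_n$ axioms one by one. Additivity and extensivity of $c_i^{(n)}$ follow from $1'\leq t_i^{(n)}$ and RA-distributivity, the idempotence clause reduces to $t_i^{(n)};t_i^{(n)}=t_i^{(n)}$, and the commutativity $c_i^{(n)}c_j^{(n)} = c_j^{(n)}c_i^{(n)}$ reduces to the commutativity of the $t_i^{(n)}$'s from (iii). The hard part will be the axioms governing the interaction of diagonals and cylindrifications: namely $d_{ii}^{(n)} = 1^{(n)}$, the Merging axiom $c_k^{(n)}(d_{ik}^{(n)}\cdot d_{kj}^{(n)}) = d_{ij}^{(n)}$ for distinct $i,j,k$, and the separation axiom $c_i^{(n)}(d_{ij}^{(n)}\cdot x)\cdot c_i^{(n)}(d_{ij}^{(n)}\cdot -x) = 0$ for $i\neq j$. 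These encode the pair-coding behaviour and are exactly where the \QRA{} axiom $\breve{p};q=1$ bites: it forces the projections $\pi_i^{(n)}$ to be jointly surjective onto $\epsilon^{(n)}$. Once one knows, as a further lemma, that each $\pi_i^{(n)}$ is functional with full range and that the family $\{\pi_i^{(n)}:i<n\}$ is jointly one-to-one on $\epsilon^{(n)}$, each of the diagonal axioms reduces to a lengthy but purely mechanical RA computation on these functional elements, in the style of the Tarski--Maddux pairing argument. \end{demo}
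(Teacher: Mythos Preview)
Your sketch is sound and follows exactly the route the paper takes: the paper does not argue anything itself here but simply cites Simon's Lemma~3.4 (for closure) and Theorem~3.9 (for the $\CA_n$ axioms), and what you have written is a faithful outline of Simon's argument there. One small caution: the identities you invoke should be relativized to $\epsilon^{(n)}$ rather than to the global $1'$ (for instance, extensivity of $c_i^{(n)}$ comes from $\epsilon^{(n)}\leq t_i^{(n)}$, not $1'\leq t_i^{(n)}$, since the unit of $\B_n$ is $1^{(n)}=1;\epsilon^{(n)}$ and the $\pi_i^{(n)}$ have domain $\epsilon^{(n)}$, not $1'$); this is a cosmetic adjustment rather than a real gap.
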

\begin{proof} (1) is  proved in \cite{Andras} lemma 3.4 p.273-275 where the terms are definable in a $\QRA$. 
%The same proof goes through because the relation algebra reduct of the given 
%$CA_3$ is a $QRA$, so the hitherto defined terms have the same meaning. 
That it is a $\CA_n$ can be proved as \cite{Andras}  theorem 3.9.
\end{proof} 

\begin{definition} Consider the following terms.
$$suc (x)=1; (\breve{p}; x; \breve{q})$$
and
$$pred(x)=\breve{p}; ranx; q.$$ 
\end{definition}
It is proved in \cite{Andras} that $\B_n$ neatly embeds into $\B_{n+1}$ via $succ$. The successor function thus codes 
extra dimensions. The thing to observe here is that  we will see that $pred$; its inverse; 
guarantees a condition of commutativity of two operations: forming neat reducts and forming subalgebras;
it does not make a difference which operation we implement first, as long as we implement both one after the other.
So the function $succ$ {\it captures the extra dimensions added.}. From the point of view of {\it definability} it says 
that terms definable in extra dimensions add nothing, they are already term definable.
And this indeed is a definability condition, that will eventually lead to stong interpolation property we wnat.

\begin{theorem}\label{neat} Let $n\geq 3$. Then $succ: {\B}_n\to \{a\in {\B}_{n+1}: c_0a=a\}$ 
is an isomorphism into a generalized neat reduct of ${\B}_{n+1}$.
Strengthening the condition of surjectivity,  for all $X\subseteq \B_n$, $n\geq 3$, we have (*)
$$succ(\Sg^{\B_n}X)\cong \Nr_{1,2,\ldots, n}\Sg^{\B_{n+1}}succ(X).$$
\end{theorem}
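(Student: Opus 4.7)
The plan is to split the theorem into its two assertions. The first assertion, that $succ$ is a $\CA$-isomorphism onto a subset of $\{a\in \B_{n+1}: c_0a=a\}$, is essentially the content of Simon's embedding theorem cited from \cite{Andras}: one verifies directly from the definition $succ(x)=1;(\breve{p};x;\breve{q})$ that $c_0(succ(x))=succ(x)$ (since $succ(x)$ is already of the form $1;y$ with the $0$-th coordinate ``projected away'' by $\breve{p}$), that $succ$ preserves the Boolean operations, and that it intertwines $c_i^{(n)}$ with $c_{i+1}^{(n+1)}$ and $d_{ij}^{(n)}$ with $d_{i+1,j+1}^{(n+1)}$ after the index shift, using the defining equations of $p,q$ for a $\QRA$. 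Injectivity follows from the existence of the partial inverse given by $pred$.

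The substantive novelty is (*), and the easy inclusion $succ(\Sg^{\B_n}X)\subseteq \Nr_{1,\ldots,n}\Sg^{\B_{n+1}}succ(X)$ is immediate: $succ$ is a homomorphism with image landing inside $\{a\in \B_{n+1}: c_0a=a\}=\Nr_{1,\ldots,n}\B_{n+1}$, so $succ(\Sg^{\B_n}X)$ is a subalgebra of $\Nr_{1,\ldots,n}\B_{n+1}$ containing $succ(X)$, hence contained in $\Sg^{\B_{n+1}}succ(X)$ intersected with $\Nr_{1,\ldots,n}\B_{n+1}$.

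For the reverse inclusion I would use $pred$ as a retraction. The key lemma I would establish is that $pred\circ succ$ is the identity on $\B_n$ (which is a direct computation using $\breve{p};p\leq 1'$, $\breve{p};q=1$ and $q;\breve{q}\leq 1'$) and, more delicately, that the restriction $pred\restr{\{a\in \B_{n+1}: c_0a=a\}}$ is a $\CA_n$-homomorphism into $\B_n$ when the indices on the target side are shifted down by one. Granting this, the proof of (*) proceeds by induction on the complexity of a term $t$ witnessing membership of $a\in \Sg^{\B_{n+1}}succ(X)$ with $c_0a=a$: apply $pred$ to $a$, use the homomorphism property to push $pred$ inside the term, and use $pred(succ(x))=x$ at the leaves to obtain a term of the same shape over $X$ in $\B_n$ whose value $b$ satisfies $succ(b)=a$.

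The main obstacle, and where I would focus the detailed verification, is showing that $pred$ really does respect all the $\CA$-operations on the $c_0$-fixed part of $\Sg^{\B_{n+1}}succ(X)$, and not merely on the image $succ(\B_n)$. One has to check identities of the form $pred(c_{i+1}^{(n+1)}a)=c_i^{(n)}pred(a)$ and $pred(a\cdot d_{i+1,j+1}^{(n+1)})=pred(a)\cdot d_{ij}^{(n)}$ for $a$ with $c_0a=a$, and these reduce after some manipulation to the quasi-projection axioms together with the computations from \cite{Andras} governing how $p,q,\epsilon^{(k)},\pi_i^{(k)},t^{(k)}$ interact across dimensions. Once these identities are verified, the induction closes and gives $pred(a)\in \Sg^{\B_n}X$ with $succ(pred(a))=a$, establishing (*).
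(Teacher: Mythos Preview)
Your approach matches the paper's: both pivot on $pred$ as the inverse of $succ$, and the paper's proof of (*) is literally the two identities $succ(pred\,x)=x$ for $c_0x=x$ and $pred(succ\,x)=x$, with a pointer to \cite{Andras} lemmas 4.6--4.10 for the computations.

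You do, however, misplace the difficulty. The identity $succ(pred\,x)=x$ for $c_0x=x$ already says that $succ$ is \emph{onto} $\{a\in\B_{n+1}:c_0a=a\}$. Hence the $c_0$-fixed part of $\Sg^{\B_{n+1}}succ(X)$---indeed of anything inside $\B_{n+1}$---is automatically contained in $succ(\B_n)$, and on that set $pred$ is nothing but $succ^{-1}$, hence trivially a $\CA_n$-isomorphism with no separate verification needed. Your ``main obstacle'' (that $pred$ respect operations on a $c_0$-fixed set possibly larger than $succ(\B_n)$) therefore dissolves: there is no such larger set.

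What surjectivity alone does \emph{not} give is that a $c_0$-fixed element of $\Sg^{\B_{n+1}}succ(X)$ lies in $succ(\Sg^{\B_n}X)$ rather than merely in $succ(\B_n)$; this is the genuine $NS$-type content of (*). Your term-induction aims at exactly this, but as written it does not close: intermediate subterms of a $\B_{n+1}$-term (for instance $d_{01}\cdot succ(x)$ inside $c_0(d_{01}\cdot succ(x))$) need not be $c_0$-fixed, so identities for $pred$ verified only on $c_0$-fixed elements cannot be pushed through the term. One needs identities governing $pred$ on arbitrary elements of $\B_{n+1}$---in particular how it absorbs $c_0^{(n+1)}$ and handles $d_{0j}^{(n+1)}$---and that is precisely what the paper outsources to Simon's lemmas 4.6--4.10 rather than spelling out.
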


\begin{proof} The operations are respected by \cite{Andras} theorem 5.1. 
The last condition follows  because of the presence of the 
functional element $pred$, since we have $suc(pred x)=x$ and $pred(sucx)=x$, when $c_0x=x$, \cite{Andras} 
lemmas 4.6-4.10. 
\end{proof}
\begin{theorem}
Let $n\geq 3$. Let ${\C}_n$ be the algebra obtained from ${\B}_n$ by reshuffling the indices as follows; 
set $c_0^{{\C}_n}=c_n^{{\B}_n}$ and $c_n^{{\C}_n}=c_0^{{\cal B}_n}$. Then ${\C}_n$ is a cylindric algebra,
and $suc: {\C}_n\to \Nr_n{\C}_{n+1}$ is an isomorphism for all $n$. 
Furthermore, for all $X\subseteq \C_n$ we have
$$suc(\Sg^{\C_n}X)\cong \Nr_n\Sg^{\C_{n+1}}suc(X).$$ 
\end{theorem}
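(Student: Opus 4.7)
The plan is to reduce the theorem to theorem \ref{neat} on $\B_n$ by carrying out the index reshuffle consistently at both dimensions $n$ and $n+1$, so that a generalized neat reduct becomes a standard (initial segment) neat reduct.

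First, I would verify that $\C_n\in\CA_n$. The cylindric algebra axioms refer to the dimension indices only generically (${\sf c}_i$ is a closure operator, ${\sf c}_i$ and ${\sf c}_j$ commute, the diagonal axioms involve generic pairs $(i,j)$, and so on); consequently any permutation of the indices of a cylindric algebra yields again a cylindric algebra. Applying the swap of ${\sf c}_0$ with the last cylindrification (and of the corresponding diagonals) to $\B_n\in\CA_n$ therefore produces a cylindric algebra of dimension $n$, namely $\C_n$.

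Second, I would promote theorem \ref{neat} to the $\C$ setting. That theorem already supplies the isomorphism $suc:\B_n\to\{a\in\B_{n+1}:{\sf c}_0 a=a\}$ onto the generalized neat reduct $\Nr_{1,\ldots,n}\B_{n+1}$. I apply the analogous swap to $\B_{n+1}$ to obtain $\C_{n+1}$, so that the removed dimension (originally $0$ in $\B_{n+1}$) becomes the last dimension in $\C_{n+1}$. Under this relabelling, $\{a\in\B_{n+1}:{\sf c}_0 a=a\}$ is identified with $\Nr_n\C_{n+1}$. Since the swap used to define $\C_n$ is chosen compatibly with the one used to define $\C_{n+1}$ (they agree on the shared indices), $suc$ remains a cylindric homomorphism when both sides are read in the new labelling, and it is an isomorphism onto $\Nr_n\C_{n+1}$.

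Third, the generated-subalgebra identity $suc(\Sg^{\C_n}X)\cong\Nr_n\Sg^{\C_{n+1}}suc(X)$ is precisely the assertion (*) of theorem \ref{neat} translated along the same relabelling dictionary, and the proof follows by invoking that statement directly.

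The only real obstacle is bookkeeping: one has to fix the two reshuffles carefully so that they are compatible, ensuring that $suc$ still intertwines the cylindric operations under the new names. Once this dictionary is pinned down, all three conclusions---that $\C_n$ is a $\CA_n$, that $suc$ is an isomorphism onto the standard neat reduct, and that $suc$ commutes with forming generated subalgebras---follow immediately from theorem \ref{neat}.
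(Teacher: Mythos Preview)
Your proposal is correct and takes essentially the same approach as the paper, whose proof consists of the single phrase ``immediate from \ref{neat}.'' You have simply unpacked that phrase: the reshuffle of indices turns the generalized neat reduct $\Nr_{1,\ldots,n}\B_{n+1}$ of theorem \ref{neat} into the standard initial-segment neat reduct $\Nr_n\C_{n+1}$, and all three assertions then transfer verbatim.
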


\begin{proof} immediate from \ref{neat}
\end{proof}  
\begin{theorem} Let ${\C}_n$ be as above. Then $succ^{m}:{\C_n}\to \Nr_n\C_m$ is an isomophism, such that 
for all $X\subseteq A$, we have
$$suc^{m}(\Sg^{\C_n}X)=\Nr_n\Sg^{\C_m}suc^{n-1}(X).$$
\end{theorem}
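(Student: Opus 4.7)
The plan is to induct on $m\ge n$, with the preceding theorem (the case $m=n+1$) serving as the inductive step. The base $m=n$ is trivial (take $suc^0$ to be the identity); the case $m=n+1$ is exactly the previous theorem. For the inductive step, assume $suc^{m-n}:\C_n\to\Nr_n\C_m$ is an isomorphism satisfying $suc^{m-n}(\Sg^{\C_n}X)=\Nr_n\Sg^{\C_m}suc^{m-n}(X)$ for all $X\subseteq \C_n$, and aim to establish the analogue for $suc^{m+1-n}=suc\circ suc^{m-n}:\C_n\to \Nr_n\C_{m+1}$. Applying the preceding theorem not to $\C_n$ but to $\C_m$ gives an isomorphism $suc:\C_m\to\Nr_m\C_{m+1}$ with $suc(\Sg^{\C_m}Y)=\Nr_m\Sg^{\C_{m+1}}suc(Y)$ for $Y\subseteq \C_m$; composing yields an injective homomorphism $\C_n\to\C_{m+1}$ to begin analysing.

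The image of $suc^{m+1-n}$ lands in $\Nr_n\C_{m+1}$: for $a\in\Nr_n\C_m$ (the image of $suc^{m-n}$), since $suc$ preserves cylindrifications, $c_i^{\C_{m+1}}suc(a)=suc(c_i^{\C_m}a)=suc(a)$ for all $n\le i<m$, while $c_m^{\C_{m+1}}suc(a)=suc(a)$ because $suc(a)\in \Nr_m\C_{m+1}$. Surjectivity onto $\Nr_n\C_{m+1}$ runs symmetrically: any $b\in \Nr_n\C_{m+1}\subseteq \Nr_m\C_{m+1}$ is $suc(a)$ for a unique $a\in \C_m$; preservation of cylindrifications plus injectivity of $suc$ propagate the equations $c_i b=b$ back to $c_i a=a$ for $n\le i<m$, so $a\in \Nr_n\C_m$, and the inductive hypothesis places $a$ in $suc^{m-n}(\C_n)$, giving $b\in suc^{m+1-n}(\C_n)$.

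The main obstacle is the generation identity. Setting $Y=suc^{m-n}(X)$, the inductive hypothesis rewrites $suc^{m+1-n}(\Sg^{\C_n}X)$ as $suc(\Nr_n\Sg^{\C_m}Y)$, so it suffices to establish $suc(\Nr_n\Sg^{\C_m}Y)=\Nr_n\Sg^{\C_{m+1}}suc(Y)$. For the forward inclusion, combine preservation of $c_i$ for $i<m$, the base-case identity $suc(\Sg^{\C_m}Y)=\Nr_m\Sg^{\C_{m+1}}suc(Y)$, and the elementary fact $\Nr_n\Nr_m=\Nr_n$ for $n\le m$. For the reverse, any $b\in \Nr_n\Sg^{\C_{m+1}}suc(Y)\subseteq \Nr_m\Sg^{\C_{m+1}}suc(Y)=suc(\Sg^{\C_m}Y)$ writes as $suc(a)$ for $a\in\Sg^{\C_m}Y$, and injectivity of $suc$ pulls $c_i b=b$ back to $c_i a=a$ for $n\le i<m$, so $a\in\Nr_n\Sg^{\C_m}Y$. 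Assembling, $suc^{m+1-n}(\Sg^{\C_n}X)=\Nr_n\Sg^{\C_{m+1}}suc^{m+1-n}(X)$, closing the induction. The only genuine technical point is the commutativity $suc\circ \Nr_n=\Nr_n\circ suc$ on subalgebras of $\C_m$; once that is in hand, iteration of the preceding theorem together with $\Nr_n\Nr_m=\Nr_n$ makes the conclusion essentially formal.
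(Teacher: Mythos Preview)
Your proposal is correct and follows essentially the same approach as the paper: the paper's entire proof is the single line ``By induction on $n$'', and you have simply carried out that induction in detail (inducting on $m\ge n$, which is the natural variable to induct on here), using the preceding one-step theorem together with the elementary identity $\Nr_n\Nr_m=\Nr_n$. There is nothing to add.
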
 
\begin{proof} By induction on $n$.
\end{proof}
Now we want to neatly embed our $\QRA$ in $\omega$ extra dimensions. At the same we do not want to lose, our control over the streching;
we still need the commutativing of taking, now $\Ra$  reducts with forming subalgebras; we call this property the $\Ra S$ property.
To construct the big $\omega$ dimensional algebra, we use a standard ultraproduct construction.
So here we go.
For $n\geq 3$, let  ${\C}_n^+$ be an algebra obtained by adding $c_i$ and $d_{ij}$'s for $\omega>i,j\geq n$ arbitrarity and with 
$\Rd_n^+\C_{n^+}={\B}_n$. Let ${\C}=\prod_{n\geq 3} {\C}_n^+/G$, where $G$ is a non-principal ultrafilter
on $\omega$. 
In our next theorem, we show that the algebra $\A$ can be neatly embedded in a locally finite algebra $\omega$ dimensional algebra
and we retain our $\Ra S$ property. 
%commutativity proved above for finite dimensions, extend to $\omega$ dimensions. This property is crucil for our
%proof.

\begin{theorem} Let $$i: {\A}\to \Ra\C$$
be defined by
$$x\mapsto (x,  suc(x),\ldots suc^{n-1}(x),\dots n\geq 3, x\in B_n)/G.$$ 
Then $i$ is an embedding ,
and for any $X\subseteq A$, we have 
$$i(\Sg^{\A}X)=\Ra\Sg^{\C}i(X).$$
\end{theorem}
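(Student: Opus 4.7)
The plan is to verify in turn that $i$ is well-defined with image in $\Ra\C$; that $i$ preserves the relation-algebra operations; that $i$ is injective; and that $i(\Sg^{\A}X)=\Ra\Sg^{\C}i(X)$. For well-definedness, fix $x\in A$ and choose $n_0\geq 3$ with $x\in B_{n_0}$. The $n$-th coordinate of $i(x)$ (for $n\geq n_0$) is $suc^{n-n_0}(x)$, and by the preceding theorems $suc^{n-n_0}(x)\in \Nr_{n_0}\C_n$. Because the cylindrifications and diagonals added in passing from $\C_n$ to $\C_n^+$ act trivially on such an element by construction, the class $(suc^{n-n_0}(x))/G$ is fixed by $c_i$ for every index $i$ outside the two designated relation-algebra coordinates, so it lies in $\Ra\C$; non-principality of $G$ lets us ignore the finitely many coordinates $n<n_0$.

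The homomorphism check is routine: boolean operations on $\C$ are coordinatewise by the fundamental theorem of ultraproducts, and each $suc^k$ is a boolean map. The relation-algebra operations $;$, $\breve{\ }$ and $1'$ are definable by CA-terms using one spare dimension, and the earlier result that each $suc:\C_n\to \Nr_n\C_{n+1}$ is a CA-isomorphism transports these terms faithfully. Injectivity of $i$ is immediate from non-principality of $G$ combined with injectivity of each $suc$.

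The substantive step is the equality $i(\Sg^{\A}X)=\Ra\Sg^{\C}i(X)$. The inclusion $\subseteq$ is the homomorphism property applied to generators. For $\supseteq$, take $a=(a_n)_{n\geq n_0}/G\in \Ra\Sg^{\C}i(X)$. Since $i(X)$ is obtained by applying $suc$ coordinatewise to $X$, for $G$-almost all $n$ the coordinate $a_n$ lies in $\Sg^{\C_n}suc^{n-n_0}(X)$; and since $a\in \Ra\C$, the same $a_n$ lies in $\Nr_{n_0}\C_n$ for $G$-almost all $n$. Therefore $a_n\in \Nr_{n_0}\Sg^{\C_n}suc^{n-n_0}(X)$, which by iterating Theorem \ref{neat} equals $suc^{n-n_0}(\Sg^{\C_{n_0}}X)$. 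Thus there exists $b_n\in \Sg^{\C_{n_0}}X$ with $suc^{n-n_0}(b_n)=a_n$, and applying the term-definable left inverse $pred^{n-n_0}$ (valid on elements in the image of $suc^{n-n_0}$) gives $b_n=pred^{n-n_0}(a_n)$, a uniform-in-$n$ recipe that survives the ultrafilter quotient and yields a single $b\in \Sg^{\A}X$ with $i(b)=a$.

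The main obstacle is exactly this last propagation: converting the finite-dimensional commutation of Theorem \ref{neat} into an $\Ra$-level commutation for the ultraproduct $\C$. The generation operator $\Sg$ is not first-order, so Los's theorem alone does not suffice; what makes the argument go through is the term-definable left inverse $pred$ of $suc$, supplied by the quasi-projections of the QRA, which permits a uniform-in-$n$ reconstruction of preimages. Without such a $pred$ one could still exhibit elements in each finite coordinate, but the preimages would not in general cohere into a single element of $\A$, and the $\Ra S$ property would fail.
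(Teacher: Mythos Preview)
Your approach coincides with the paper's: both reduce the $\Ra S$ property for the ultraproduct $\C$ to the already-established finite-dimensional commutation. The paper's proof is a one-line hint (``if this does not happen, then it will not happen in a finite reduct''), so your write-up supplies details the paper omits.

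One step deserves tightening. You produce, for $G$-almost all $n$, an element $b_n\in\Sg^{\C_{n_0}}X$ with $suc^{n-n_0}(b_n)=a_n$, and assert that the recipe $b_n=pred^{n-n_0}(a_n)$ ``survives the ultrafilter quotient'' to give a single $b$. But the exponent of $pred$ varies with $n$, and the $b_n$ live in the fixed algebra $\C_{n_0}$ rather than in the factors of the product, so {\L}o\'{s}'s theorem does not literally apply here, and term-definability of $pred$ alone does not explain why the $b_n$ cohere. What actually forces them to agree is this: since $a\in\Sg^{\C}i(X)$, write $a=\sigma^{\C}(i(x_1),\ldots,i(x_k))$ for a \emph{fixed} term $\sigma$ using only finitely many dimensions; for all sufficiently large $n$ the representative $a_n=\sigma^{\C_n}(suc^{n-n_0}(x_1),\ldots,suc^{n-n_0}(x_k))$ then satisfies $suc(a_n)=a_{n+1}$, because $suc:\C_n\to\Nr_n\C_{n+1}$ is a $\CA_n$-isomorphism and $\sigma$ is a $\CA_n$-term. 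Hence $b_{n+1}=pred^{n+1-n_0}(suc(a_n))=pred^{n-n_0}(a_n)=b_n$, so the $b_n$ are eventually constant, and that constant value is your $b$. Term-definability of $pred$ is indeed essential, but for computing each individual $b_n$; the coherence across $n$ comes from the relation $suc(a_n)=a_{n+1}$ furnished by the specific term representative of $a$.
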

\begin{proof} The idea is that if this does not happen, then it will not happen in a fnite reduct, and this impossible \cite{Sayed}.

\end{proof}

Note that Simon's theorem, actually says that in every $\QRA$, there sits an $\RCA_3$.

\begin{theorem} Let $\bold Q\in {\RA}$. Then for all $n\geq 4$, there exists a unique 
$\A\in S\Nr_3\CA_n$ such that $\bold Q=\Ra\A$, 
such that for all $X\subseteq A$, $\Sg^{\bf Q}X=\Ra\Sg^{\A}X.$
\end{theorem}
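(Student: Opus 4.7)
The plan is to harvest, for each fixed $n\geq 4$, an algebra $\A$ directly out of the $\omega$-dimensional algebra $\C$ constructed in the previous theorem. Recall that $\C=\prod_{n\geq 3}\C_n^+/G$ and that we have a neat embedding $i:\bold Q\to \Ra\C$ with the key commutation property $i(\Sg^{\bold Q}X)=\Ra\Sg^{\C}i(X)$ for every $X\subseteq Q$. The idea is that this $i$ already factors, dimension by dimension, through the finite-dimensional $\C_n$'s (essentially by \L o\'s's theorem applied to the ultraproduct defining $\C$), and so picks out for each $n\geq 4$ a canonical candidate for $\A$.

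First I would set $\A:=\Sg^{\Nr_3\C_n}i_n(\bold Q)$, where $i_n:\bold Q\to\Ra\C_n$ is the embedding obtained from Simon's lemma together with the isomorphisms $\mathrm{succ}^m:\C_n\to\Nr_n\C_m$ established in the preceding theorems. By construction $\A$ is a subalgebra of $\Nr_3\C_n$ with $\C_n\in\CA_n$, so $\A\in S\Nr_3\CA_n$ as required. Moreover, because the $\Ra$-reduct depends only on the 2-dimensional elements of $\A$, and the image $i_n(\bold Q)$ already sits inside $\Ra\C_n$, we get $\Ra\A\supseteq i_n(\bold Q)$; the reverse inclusion $\Ra\A=i_n(\bold Q)$ will follow from the promised commutation of $\Ra$ with the subalgebra generated.

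Next I would verify the commutation $\Sg^{\bold Q}X=\Ra\Sg^{\A}X$ for arbitrary $X\subseteq A$. The point is that in the $\QRA$ construction the function $\mathrm{pred}$ acts as an honest inverse to $\mathrm{succ}$ on the range of $\mathrm{succ}$ (by Theorem \ref{neat} and the surrounding material from \cite{Andras}), so that extra dimensions add no genuinely new 2-dimensional elements. Concretely, any 2-dimensional element produced by applying $n$-dimensional cylindric operations to $X$ can be pulled back via iterated $\mathrm{pred}$ to an element of $\Sg^{\bold Q}X$. This is exactly where the main technical difficulty lies: one must carefully trace through Simon's terms $\pi_i^{(n)}, t_i^{(n)}, \epsilon^{(n)}$ and show that every cylindrification and diagonal in the $\CA_n$-generated closure is $\bold Q$-definable from $X$. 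The earlier ultraproduct argument already did essentially this in the $\omega$-dimensional limit, so one reduces the finite case to it by projecting.

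Finally, uniqueness is the easy part: if $\A'\in S\Nr_3\CA_n$ is another such algebra with $\Ra\A'=\bold Q$ and $\Sg^{\bold Q}X=\Ra\Sg^{\A'}X$, then $\A'=\Sg^{\A'}\bold Q$ (take $X=\bold Q$) and the identity on $\bold Q$ extends uniquely to an isomorphism $\A\to\A'$ by the universal property provided by the UNEP-style argument of Lemma 2.6 transposed to the $\Ra$ setting. The main obstacle, as indicated, is the finite-dimensional commutation $\Ra\Sg^{\A}X=\Sg^{\bold Q}X$; once this is in hand, existence, the identification $\bold Q=\Ra\A$, and uniqueness follow uniformly.
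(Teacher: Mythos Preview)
Your proposal is essentially the same as the paper's, which is a one-liner: ``This follows from the previous theorem together with the $\Ra S$ property.'' You are simply filling in what that line means: take $\A$ to be (the image under $succ^{n-3}$ of) the $\CA_3$ that Simon's construction produces from $\bold Q$, sitting inside $\Nr_3\C_n$, and invoke the commutation of $succ$ with subalgebra formation that was already established in Theorem~\ref{neat} and the subsequent theorem about $succ^m$. Your treatment of uniqueness via an UNEP-type lifting argument is also exactly what the paper intends; the corollary immediately following the theorem states precisely that an isomorphism of $\Ra$-reducts lifts to the dilations.

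One point deserves correction. You propose to obtain the finite-dimensional commutation $\Sg^{\bold Q}X=\Ra\Sg^{\A}X$ by going up to the $\omega$-dimensional ultraproduct $\C$ and then ``projecting'' back down via \L o\'s. This is backwards relative to the paper's logical flow, and it is an unnecessary detour. In the paper the finite-dimensional commutation is established \emph{first}, directly from the $succ$/$pred$ apparatus (the key being that $pred$ inverts $succ$ on the relevant range, so nothing new in dimension $2$ is produced by passing to $\C_n$). The ultraproduct is then built \emph{from} these finite-dimensional pieces, and its commutation property is deduced from theirs (the paper's proof of the preceding theorem says exactly: ``if this does not happen, then it will not happen in a finite reduct, and this is impossible''). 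So for the present theorem you should appeal directly to the $succ^m$ theorem, not to the ultraproduct; everything you need is already available before $\C$ is ever formed.
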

\begin{proof} This follows from the previous theorem together with $\Ra S$ property.
\end{proof}

\begin{corollary} Assume that $Q=\Ra\A\cong \Ra\B$ then this lifts to an isomorphism from $\A$ to $\B$.
\end{corollary}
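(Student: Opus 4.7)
The plan is to transport the argument of Lemma 2.5 from the neat-reduct setting to the $\Ra$-reduct setting, using the previous theorem as the $\Ra$-analog of the hypothesis $\A=\Sg^{\A}\Nr_{\alpha}\A$ used there. The first step is to observe that each of $\A$ and $\B$ is generated, in its $\CA_n$-structure, by its $\Ra$ reduct: indeed, $\Sg^{\A}(\Ra\A)$ is a $\CA_n$-subalgebra of $\A$ lying in $S\Nr_3\CA_n$, and the $\Ra S$ identity $\Sg^{\bold Q}X=\Ra\Sg^{\A}X$ from the previous theorem, applied with $X=\Ra\A$, gives $\Ra\Sg^{\A}(\Ra\A)=\Ra\A$, so the uniqueness clause of that theorem forces $\Sg^{\A}(\Ra\A)=\A$; likewise $\Sg^{\B}(\Ra\B)=\B$.

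Given the isomorphism $f:\Ra\A\to\Ra\B$, set $g=\Sg^{\A\times\B}f$, viewing $f$ as a subset of $A\times B$. By the lemma on homomorphisms as functional subalgebras of products, it suffices to show that $g$ is a one-to-one function with domain $A$; this then gives an isomorphism $\A\to\B$ extending $f$. The domain calculation is immediate, $\mathrm{Dom}\,g=\Sg^{\A}\mathrm{Dom}\,f=\Sg^{\A}(\Ra\A)=A$, and by a symmetric argument applied to $\Sg^{\B\times\A}f^{-1}$ it is enough to check that $g$ is a function. Paralleling $(\ast)$ in Lemma 2.5, the crux is the auxiliary claim: if $(a,b)\in g$ and $c_k(a,b)=(a,b)$ for every $k\in n\setminus\{n-2,n-1\}$, then $f(a)=b$. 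This follows from the chain
$$(a,b)\in \Ra\Sg^{\A\times\B}f=\Sg^{\Ra(\A\times\B)}f=\Sg^{\Ra\A\times\Ra\B}f=f,$$
whose first equality is the $\Ra S$ property for the product $\A\times\B$, second is the identity $\Ra(\A\times\B)=\Ra\A\times\Ra\B$, and third is the fact that $f$ is already an $\RA$-subalgebra of $\Ra\A\times\Ra\B$. With this in hand, functionality follows as in Lemma 2.5: for $(x,y),(x,z)\in g$ set $w=y\triangle z$ and cylindrify $(0,w)\in g$ over the finitely many indices in $\Delta w\cap(n\setminus\{n-2,n-1\})$ to obtain $(0,w')\in g$ with $\Delta w'\subseteq\{n-2,n-1\}$; the auxiliary claim then yields $f(0)=w'=0$, whence $y=z$.

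The main obstacle is justifying the identity $\Ra\Sg^{\A\times\B}f=\Sg^{\Ra(\A\times\B)}f$. In Lemma 2.5 the neat-reduct analog was handled by appealing to $\A\times\B\in\Dc_{\alpha+\omega}$; here one needs the corresponding fact that $\A\times\B$ itself satisfies the $\Ra S$ identity of the previous theorem. This reduces to checking that $S\Nr_3\CA_n$ is closed under products and that $\Ra$ distributes over them, after which the uniqueness clause of the previous theorem, applied to the $\RA$ $\Ra\A\times\Ra\B$, delivers the $\Ra S$ identity for $\A\times\B$. Once that bookkeeping is in place, the rest of the argument is a mechanical transcription of the Lemma 2.5 template.
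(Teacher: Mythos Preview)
Your proposal is correct and is essentially the paper's approach made explicit. The paper states the corollary without proof, treating it as immediate from the uniqueness clause of the preceding theorem; but that uniqueness is itself only sketched (``follows from the previous theorem together with the $\Ra S$ property''), and writing it out would amount to exactly the Lemma~2.5 template you carry over to the $\Ra$-reduct setting---generate the graph of $f$ in the product, use $\Ra S$ to push the resulting subalgebra into $\Ra\A\times\Ra\B$, and conclude functionality from $f(0)=0$. Your version is in fact slightly cleaner than the paper's Lemma~2.5 at the final step, since you explicitly cylindrify over the finitely many indices in $\Delta w$ outside $\{n-2,n-1\}$ rather than handling one index at a time.
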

The previous theorem says that $\Ra$ as a functor establishes an equivalence between ${\QRA}$ 
and a reflective subcategory of $\Lf_{\omega}.$
We say that $\A$ is the $\omega$ dilation of ${\bf Q}$.
Now we are ready for:

\begin{theorem} $\QRA$ has $SUPAP$.
\end{theorem}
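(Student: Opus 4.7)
\begin{demo}{Proof proposal}
The plan is to reduce superamalgamation in $\QRA$ to superamalgamation in $\Lf_\omega$ via the $\omega$-dilation functor constructed above, exploiting the $\Ra S$ commutativity property as the substitute for the $NS$ condition that drove the cylindric case.

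Suppose $\bf Q_0\in \QRA$ with monomorphisms $i_1:\bf Q_0\to \bf Q_1$ and $i_2:\bf Q_0\to \bf Q_2$ in $\QRA$. First I would form the $\omega$-dilations $\A_0,\A_1,\A_2$ of $\bf Q_0,\bf Q_1,\bf Q_2$ from the previous theorem, so that $\bf Q_k=\Ra\A_k$ and $\Sg^{\bf Q_k}X=\Ra\Sg^{\A_k}X$ for every $X\subseteq Q_k$. By the corollary lifting $\Ra$-isomorphisms to isomorphisms of dilations (applied to the dilations of $i_j(\bf Q_0)$ viewed as $\QRA$-subalgebras of $\bf Q_j$), the embeddings $i_1,i_2$ lift canonically to embeddings $\bar i_1:\A_0\to \A_1$ and $\bar i_2:\A_0\to \A_2$ in $\Lf_\omega$.

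Next I would invoke the superamalgamation property for $\Lf_\omega$, which is the algebraic content of Craig's interpolation theorem for first order logic, to obtain $\D\in \Lf_\omega$ and monomorphisms $m_1:\A_1\to\D$, $m_2:\A_2\to\D$ with $m_1\circ \bar i_1=m_2\circ \bar i_2$ and satisfying the superamalgamation inequality in $\D$. Then I would set $\E=\Sg^{\D}(m_1(\A_1)\cup m_2(\A_2))$ and take $\Ra\E\in\QRA$, with the embeddings $\Ra m_1:\bf Q_1\to \Ra\E$ and $\Ra m_2:\bf Q_2\to\Ra\E$ obtained by restricting to the two-dimensional elements. Commutativity of the resulting square over $\bf Q_0$ is immediate from the lifting.

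The hard part, which is the real content of the theorem, is verifying that the superamalgamation inequality survives passage to $\Ra$-reducts. Suppose $\Ra m_1(x)\leq \Ra m_2(y)$ with $x\in Q_1$, $y\in Q_2$; then $m_1(x)\leq m_2(y)$ in $\D$, so the $\Lf_\omega$-superamalgamation gives some $z\in A_0$ with $x\leq \bar i_1(z)$ and $\bar i_2(z)\leq y$. The issue is that a priori $z$ lives in $\A_0$, not in $\bf Q_0=\Ra\A_0$. To push $z$ down I would use that $x,y$ are two-dimensional and that the embeddings $\bar i_j$ commute with $\Ra$; replacing $z$ by an appropriate two-dimensional element obtained by applying cylindrifications in the coordinates outside $\{n-2,n-1\}$ (which fixes $\bar i_1(z)$ and $\bar i_2(z)$ from above and below respectively, because these are already two-dimensional after composition with $m_j$), one produces $z'\in Nr_{n-2,n-1}\A_0=\Ra\A_0=\bf Q_0$ with $x\leq i_1(z')\leq \Ra m_2(y)$-preimage on the other side. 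This last step is exactly where the strengthened $\Ra S$ clause $\Sg^{\bf Q}X=\Ra\Sg^{\A}X$ is essential: it guarantees that the interpolant generated inside the dilation does not escape the $\Ra$-reduct. Once this is in hand, $\Ra\E$ together with $\Ra m_1,\Ra m_2$ is the desired superamalgam in $\QRA$.
\end{demo}
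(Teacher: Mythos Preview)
Your proposal is correct and follows essentially the same route as the paper: pass to the $\omega$-dilations in $\Lf_\omega$, use $SUPAP$ there, and then force the interpolant $z$ down into $\Ra\A_0$ by cylindrifying away its finitely many extra dimensions, invoking the $\Ra S$ identity $\Sg^{\bf Q}X=\Ra\Sg^{\A}X$ to conclude that the resulting $z'$ actually lies in ${\bf Q}_0$. Two small points of polish: the coordinate label ``$\{n-2,n-1\}$'' is a leftover from the finite-dimensional definition and should be replaced by ``the two $\Ra$-coordinates of the $\omega$-dilation''; and where the paper lifts the embeddings via the $UNEP$ analogue (building $f(C)^+=\Sg^{\A^+}e_A(f(C))$ inside the dilation of the target and then identifying it with $\C^+$), you invoke the isomorphism-lifting corollary directly---these are the same mechanism, but you should make explicit that the subalgebra of $\A_j$ generated by $i_j({\bf Q}_0)$ is the object to which the corollary is applied.
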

\begin{proof}  We form the unique dilatons of the given algebras required to be superamalgamated. 
These are locally finite so we can find a superamalgam $\D$. 
Then $\Ra\D$ will be required superamalgam; it contains quasiprojections because the base algebras 
does.
Let $\A,\B\in \QRA$. Let $f:\C\to \A$ and $g:\C\to \B$ be injective homomorphisms .
Then there exist $\A^+, \B^+, \C^+\in \CA_{\alpha+\omega}$, $e_A:\A\to \Ra{\alpha}\A^+$ 
$e_B:\B\to  \Ra\B^+$ and $e_C:\C\to \Ra\C^+$.
We can assume, without loss,  that $\Sg^{\A^+}e_A(A)=\A^+$ and similarly for $\B^+$ and $\C^+$.
Let $f(C)^+=\Sg^{\A^+}e_A(f(C))$ and $g(C)^+=\Sg^{\B^+}e_B(g(C)).$
Since $\C$ has $UNEP$, there exist $\bar{f}:\C^+\to f(C)^+$ and $\bar{g}:\C^+\to g(C)^+$ such that 
$(e_A\upharpoonright f(C))\circ f=\bar{f}\circ e_C$ and $(e_B\upharpoonright g(C))\circ g=\bar{g}\circ e_C$. Both $\bar{f}$ and $\bar{g}$ are 
monomorphisms.
Now $\Lf_{\omega}$ has $SUPAP$, hence there is a $\D^+$ in $K$ and $k:\A^+\to \D^+$ and $h:\B^+\to \D^+$ such that
$k\circ \bar{f}=h\circ \bar{g}$. $k$ and $h$ are also monomorphisms. Then $k\circ e_A:\A\to \Ra\D^+$ and
$h\circ e_B:\B\to \Ra\D^+$ are one to one and
$k\circ e_A \circ f=h\circ e_B\circ g$.
Let $\D=\Ra\D^+$. Then we obtained $\D\in \QRA$ 
and $m:\A\to \D$ $n:\B\to \D$
such that $m\circ f=n\circ g$.
Here $m=k\circ e_A$ and $n=h\circ e_B$. 
Denote $k$ by $m^+$ and $h$ by $n^+$.
Now suppose that $\C$ has $NS$. We further want to show that if $m(a) \leq n(b)$, 
for $a\in A$ and $b\in B$, then there exists $t \in C$ 
such that $ a \leq f(t)$ and $g(t) \leq b$.
So let $a$ and $b$ be as indicated. 
We have  $(m^+ \circ e_A)(a) \leq (n^+ \circ e_B)(b),$ so
$m^+ ( e_A(a)) \leq n^+ ( e_B(b)).$
Since $K$ has $SUPAP$, there exist $z \in C^+$ such that $e_A(a) \leq \bar{f}(z)$ and
$\bar{g}(z) \leq e_B(b)$.
Let $\Gamma = \Delta z \sim \alpha$ and $z' =
{\sf c}_{(\Gamma)}z$. (Note that $\Gamma$ is finite.) So, we obtain that 
$e_A({\sf c}_{(\Gamma)}a) \leq \bar{f}({\sf c}_{(\Gamma)}z)~~ \textrm{and} ~~ \bar{g}({\sf c}_{(\Gamma)}z) \leq
e_B({\sf c}_{(\Gamma)}b).$ It follows that $e_A(a) \leq \bar{f}(z')~~\textrm{and} ~~ \bar{g}(z') \leq e_B(b).$ Now by hypothesis
$$z' \in \Ra\C^+ = \Sg^{\Ra\C^+} (e_C(C)) = e_C(C).$$ 
So, there exists $t \in C$ with $ z' = e_C(t)$. Then we get
$e_A(a) \leq \bar{f}(e_C(t))$ and $\bar{g}(e_C(t)) \leq e_B(b).$ It follows that $e_A(a) \leq (e_A \circ f)(t)$ and 
$(e_B \circ g)(t) \leq
e_B(b).$ Hence, $ a \leq f(t)$ and $g(t) \leq b.$
We are done.
\end{proof}


\begin{thebibliography}{100}

\bibitem{AFN} Andreka, Ferenczi, Nemeti (editors) {\it Cylindric0like algebras and algebraic logic} Bolyai Mathematical Studies, Springer (2013)
\bibitem{AMN} Andr\'eka,H, Monk., J.D., N\'emeti,I. (editors) {\it Algebraic Logic,}
North- Holland, Amsterdam, 1991.

\bibitem{HMT1} Henkin, L., Monk, J.D., and Tarski, A., {\it Cylindric Algebras Part I}.
North Holland, 1971.

\bibitem{HMT2} Henkin, L., Monk, J.D., and Tarski, A., {\it Cylindric Algebras Part II}.
North Holland, 1985.

\bibitem{Mad} Madarasz J, Sayed Ahmed T. {\it Amalgamation, interpolation and epimorphisms in algebraic logic} In \cite{AFN} p.91-104

\bibitem{Mak} Maksimova, L.
{\it Amalgamation and interpolation in normal modal logics}.
Studia Logica {\bf 50}(1991), p.457 - 471.
%\bibitem {book} Your 2 papers in book

\bibitem{P} Pigozzi,D.
{\it Amalgamation, congruence extension, and interpolation properties in algebras.}
Algebra Universalis.
{\bf 1}(1971), p.269 - 349.


\bibitem{Shelah} Sagi, G, Shelah S., {\it Weak and strong interpolation for algebraic logics.}
Journal of Symbolic Logic {\bf 71}(2006) p.104-118.


\bibitem{AU} Sayed Ahmed T {\it Some results about neat reducts} Algebra Universalis, 17-36 (1) 2010
Algebra universalis paper on neat reducts

\bibitem{Stud} Sayed Ahmed T {\it Amalgmation in universal algebraic logic} Stud Math Hung. (2012) p. 26-43


\bibitem{Sayed} Sayed Ahmed T. {\it Neat reducts and neat embeddings in cylindric algebras} In \cite{AFN} p.105-134

\bibitem{Andras}
\end{thebibliography}
\end{document}